\newcommand{\tL}{\texttt{L}}
\newcommand{\x}{\mathbf{x}}
\newcommand{\tM}{\texttt{M}} 
\newcommand{\M}{\mathbf{M}}
\newcommand{\QO}{\mathcal{Q}} 
\newcommand{\bfrak}{\mathfrak{b}}
\newcommand{\R}{\mathbb{R}}
\newcommand{\Z}{\mathbb{Z}}
\newcommand{\blambda}{\mathbf{\lambda}}
\newcommand{\hp}{\hspace{-1.5cm}}
\newcommand{\SOS}{\textit{\textsf{SOS1}}\xspace}
\newcommand{\dsum}{\displaystyle\sum}
\renewcommand{\;}{\quad}
\newtheorem{proposition}{Proposition}
\newtheorem{lemma}{Lemma}
\newtheorem{example}{Example}
\newtheorem{theorem}{Theorem}
\newtheorem{rem}{Remark}
\newtheorem{definition}{Definition}
\title{Linear, nested, and quadratic ordered measures: Computation and incorporation into optimization problems}
\author{
  Victor Blanco \\
  Department of Quantitative Methods, Faculty of Economics and Business Administration, University of Granada, Spain \\
  Institute of Mathematics of the University of Granada, Spain \\
  \texttt{vblanco@ugr.es} \\
  \And
  Miguel A. Pozo \\
  Department of Statistics and Operational Research. Faculty of Mathematics, University of Seville, Spain \\
  Institute of Mathematics of the University of Seville, Spain \\
  \texttt{puerto@us.es} \\
  \And
  Justo Puerto \\
  Department of Statistics and Operational Research. Faculty of Mathematics, University of Seville, Spain \\
  Institute of Mathematics of the University of Seville, Spain \\
  \texttt{puerto@us.es} \\
  \And
  Alberto Torrejon \\
  Department of Statistics and Operational Research. Faculty of Mathematics, University of Seville, Spain \\
  Institute of Mathematics of the University of Seville, Spain \\
  \texttt{atorrejon@us.es} \\
}
\begin{document}
\maketitle
\begin{abstract}
In this paper we address a unified mathematical optimization framework to compute a wide range of measures used in most operations research and data science contexts. The goal is to embed such metrics within general optimization models allowing their efficient computation.
We assess the usefulness of this approach applying it to three different families of measures, namely linear, nested, and quadratic ordered measures.
Computational results are reported showing the efficiency and accuracy of our methods as compared with standard implementations in numerical software packages. 
Finally, we illustrate this methodology by computing a number of optimal solutions with respect to different metrics on three well-known linear and combinatorial optimization problems: scenario analysis in linear programming, the traveling salesman and the weighted multicover set problem.
\end{abstract}

\keywords{Ordered measures \and mathematical optimization \and combinatorial problems}

\section{Introduction}
\label{sec:introduction}

Mathematical optimization models are highly dependent on the choice of the measure to be maximized or minimized in the optimization process. Most of the models in the literature, as those that arise in facility location, network design, or even in machine learning, require to measure the goodness of a solution by aggregating into a single value the separated goodness measures for the different entities involved in the problem. For instance, in facility location, the transportation costs from each customer to a set of open facilities, the length/costs of each of the edges/arcs in a network design problems, the missclassification errors for each of the observations in a supervised classification problem, or the returns of the different items selected in a portfolio selection problem. 

Classical models assume that this aggregation is performed either by the average (sum) or the worst case (max or min) behavior. These two approaches have given rise to a vast applied optimization literature in different fields.  However, it has been largely recognized that many more point of views make sense in different frameworks. In this line, different aggregation measures have been proposed to derive solutions for decisions problems that allow the decision-maker to select alternatives not only focused on economically efficient returns, but also in fair allocations, robust criteria in uncertain situations, portfolio selection problems under risk, balanced criteria in supply chain management, envy free measures in social choice and distribution problems.

The choice of measures to summarize a list of values of a feature is a recurrent problem in Statistics, and one can find in the literature diverse attempts to propose families of statistics (or aggregation criteria) to accurately summarize data. This is the case of \tL- and \tM-statistics, tendency, dispersion, and shape measures in data science, OWA operators, ordered median functions, robust measures (as, for instance, regret, $k$-sums, etc.) or fairness measures (as equity or envy-free criteria). Although, all these measures are provided with an explicit expression for its computation, in some cases the application of iterative procedures is required, as it is the case of \tL-statistics that need sorting the data. Thus, to determine the value of these measures for a large given list of data, one has to resort to algorithms that perform the required operations in a reasonable amount of time. 

This paper has two different main goals. On the one hand, we derive a unified mathematical optimization framework to efficiently compute a wide family of metrics of interest for summarizing the information of a feature based on robust measures. More specifically, we prove for the first time, that the computation of any $\tL$-statistics applied to a given set of values is equivalent to solve a linear optimization problem. 
Moreover, we also propose a generalized family of quadratic measures, that having as particular case the variance of a set of values, allows for the introduction of novel robust dispersion measures that have not been previously introduced in the literature. The second part of the paper is focused on taking advantage of these formulations to integrate these measures, that have been already justified as useful in different areas of decision analysis, as objective functions within different challenging optimization problems. That is, when the lists of values to be summarized are inherent to the decision problems.

The optimization-based approach that we propose has three main advantages: 1) the provided representations of these measures are simply modeled as linear objectives in contrast to other complex representations in the literature; 2) they can flexibly accommodate different measures within an unified framework, which allows for a common algebraic structure of the problems facilitating their analysis and strengthening; and 3) they allow for common solution approaches simplifying their use for practitioners.

Our aim is to cover the most widely used  operators in Operations Research, including among others  minimax problems \cite{Hansen1980,Schrijver1982}, combining minisum and minimax \cite{AVERBAKH1995,HL88,HLT91,Punnen1995,TPP}, $k$-centrum optimization
\cite{Garfinkel,Kalcsics2002,Punnen92,Slater1978CentersTC,Tamir01}, lexicographic
optimization \cite{CalveteMateo98,Croce99}, $k$-th best solutions
\cite{Lawler72,MPTW84,PCC03,Yen71}, most uniform solutions \cite{Galil88,LMP08}, minimum-envy solutions \cite{Espejo2009,Blanco2024}, solutions with minimum deviation \cite{Gupta90}, regret solutions
\cite{Aver01,PRC03SM}, equity measures \cite{GP88,LMP08,MPT,PA97},
discrete ordered median location problems \cite{Boland2006,MNPV08,Marin2020,Ljubic2024},
discrete optimization with ordering \cite{Fernandez2013,Fernandez2014} or covering objectives \cite{Blanco2023fairness,BP72,Breuer70,CK74,Lawler66}; among many others.

The main and common ingredient of the general measures that we analyze in this paper is the need to order the values to be summarized to compute the final outcome.  It is well-known that introducing the order when defining criteria in a decision-making problem adds a higher level of complexity to the computation of these criteria, and then to obtain a solution of the problem. Thus, one of the main challenges when addressing ordered optimization problems is the issue of the computation of those criteria (measures) as mathematical programs so that they can be embedded into more complex structures where the use of such index is instrumental. This is the case of computing the location of a facility with minimum variance of the distances, a portfolio with minimum Conditional Value at Risk (CVaR), or linear model estimators with minimum deviation residuals with respect their mean. Some of these measures have been already recognized as useful to guide, in the decision-making problem, towards  fair or equitable solutions without compromising efficiency.

The remainder of this paper is structured as follows.
In Section \ref{sec:2}, we review the existing literature on the measures that can be generalized using our proposed formulations, along with their ordered representations.
In Section \ref{sec:3}, we present linear formulations for computing ordered measures, such as \tL-statistics, and introduce additional formulations to compute measures that incorporate a nested linear structure, for example, the variance, skewness or kurtosis coefficients.
In Section \ref{sec:4}, we propose a novel quadratic ordered operator that enables the generalization of various families of quadratic measures while extending to new ones.
In Section \ref{sec:5}, we conduct an empirical comparison of different solution methods.
Finally, in Section \ref{sec:6}, we discuss the practical implications of these approaches and their potential integration into complex optimization-based decision-making problems.

{\bf Contributions:}
\begin{itemize}[nosep]
    \item We provide a unified optimization-based framework to compute a wide range of operators, including $\tL$-statistics among them, by means of linear or bilevel linear optimization problems.
    \item We introduce a new family of measures that combine well-known ordered-type statistics with quadratic measures, and develop mixed integer linear optimization models for its computation.
    \item We evaluate the computational efficiency of our models by testing various formulations while computing multiple metrics across different instance sizes.
    \item We integrate the developed optimization models into more complex optimization problems where the values to summarize are part of the decision problem.
\end{itemize}

\section{Related Literature: Descriptive measures of tendency, dispersion and scale.}
\label{sec:2}

Given a sample of $n$ observations on a certain variable, namely $\x \in \R^n$, descriptive measures (at times called \textit{statistics}) are scalars that allow to summarize and describe the essential characteristics of the variable. The measure is chosen based on the feature that is intended to be analyzed. Among the large number of proposals, the most popular are: the \emph{measures of tendency}, that can be central (as the mean, median, mode) or non-central (as the quantiles, maximum, minimum); the \emph{measures of dispersion} (as the variance, standard deviation, range); and the \emph{measures of shape} (as the skewness and the kurtosis). The first family, namely, the measures of tendency, are usually the basis to define other types of indices (e.g, the skewness is defined as the asymmetry of the sample around the mean). 
In what follows we concentrate on two popular families of descriptive measures \tL-measures and \tM-measures (referred to as \tL-statistics and \tM-statistics in Statistics or Data Analysis).

A general framework for defining and studying the behavior and properties of many of these measures is to express them as a weighted linear combination of ordered values, which are known as \tL-measures. \tL-measures were first outlined by \cite{Daniell1920}, but practically considered as an useful framework in Robust Statistics by \cite{Huber2009}, since 
some of them have been proven to be statistically robust under the presence of outliers (whereas others, as the  minimum, maximum, mean, and mid-range are known to be sensitive to extreme values). 
For a complete definition of \tL-measures in Statistics, the interested reader is referred to \cite{Serfling2009} or \cite{Huber2009}. 

In what follows we provide a formal definition of this family of measures.

\begin{definition}
Let $\x \in \R^n$ be a sample and $\blambda \in \R^n$ a vector of weights. A \tL-measure for the sample $\x$ is given by:
\begin{equation}
\label{eq:Lestimator}
\ell(\x, \blambda) := \sum_{k=1}^n \lambda_k x_{(k)},
\end{equation}
where $x_{(i)} \in \{x_1, \ldots, x_n\}$ with  $x_{(1)} \leq x_{(2)} \leq \ldots \leq x_{(n)}$. 
\end{definition}
Note that every choice of the $\blambda$-weights results in a different operator. In Table \ref{table:location_lambdas} we show some of the most popular $\blambda$-weights and their corresponding tendency measure.

Other well-known family of measures are the $\tM$-measures, which are defined as those minimizing a given loss function with respect to each of the values in $\x$. The most popular loss functions in this framework are  the Huber function \cite{Huber1964, Huber1973}, the Tukey’s Biweight function \cite{Rousseeuw1987}, the logistic function \cite{Coleman1980}, and the Talwar function \cite{Hinich1975}.


\begin{table}[ht]
\renewcommand{\arraystretch}{1.75}
\centering
\footnotesize
\resizebox{\textwidth}{!}{
\begin{tabular}{ccc}
\hline
\texttt{Statistic} & \texttt{Expression} & $\blambda$ \\
\hline
    {Arithmetic mean} &
    {$\displaystyle{\frac{1}{n} \sum_{i=1}^n x_i}$} & 
    $\frac{1}{n}(1,...,1)$ 
\\ \hline
    {Weighted mean} &
    {$\displaystyle{\frac{1}{\sum_{i=1}^n w_i} \sum_{i=1}^n w_i x_i}, \quad w_i\geq 0, \sum_{i=1}^n w_i = 1$} & 
    $(\frac{w_1}{\sum_{i=1}^n w_i},...,\frac{w_n}{\sum_{i=1}^n w_i})$ 
\\ \hline
    {Maximum} &
    $\displaystyle{\max x_i} $ & 
    $(0,...,0,1)$ 
\\ \hline
    {Minimum} &
    $\displaystyle{\min x_i} $ & 
    $(1,0,...,0,0)$ 
\\ \hline
    {Median}  &
    \footnotesize $\displaystyle{\text{median} (x) } = \begin{cases}
        x_{(m+1)}, & n = 2m+1 \\
        \frac{x_{(m)} + x_{(m+1)}}{2}, & n = 2m
    \end{cases}$ & 
    \small{\begin{tabular}[c]{@{}c@{}}   
        $(0, ..., 0, \underbrace{1}_{(m+1)-th}, 0, ..., 0)$  \\ 
        $(0, ..., 0, \underbrace{\frac{1}{2}}_{m-th}, \underbrace{\frac{1}{2}}_{(m+1)-th}, 0, ..., 0)$ 
    \end{tabular}}
\\ \hline
    {\begin{tabular}[c]{@{}c@{}} $q$-th quantile \\ (or $\tau$-quantile) \end{tabular}} &
    $x_{(q)}$, $q = \lfloor \tau  n \rfloor$  & 
    $(0, ..., 0, \underbrace{1}_{q-th}, 0, ..., 0)$ 
\\ \hline
    {Midrange} &
    $\frac{1}{2} (\max x_i + \min x_i )$  & 
    $\frac{1}{2} (1, 0, ..., 0, 1)$ 
\\ \hline
    {Tukey's trimean} &
    $\frac{1}{4}(x_{(\lfloor n/4 \rfloor)} + 2 x_{(\lfloor n/2 \rfloor)} + x_{(\lfloor 3n/4 \rfloor)})$  & 
    $\frac{1}{4}(0, ..., 0, \underbrace{1}_{\lfloor n/4 \rfloor-th}, 0, ..., 0, \underbrace{2}_{\lfloor n/2 \rfloor-th}, 0, ..., 0, \underbrace{1}_{\lfloor 3n/4 \rfloor-th}, 0, ..., 0)$ 
\\ \hline
    {\begin{tabular}[c]{@{}c@{}} General midrange \\ (midhinge, etc.) \end{tabular}} &
    $\frac{1}{2}(x_{(r)} + x_{(s)})$, $(r \leq s)$ & 
    \footnotesize  $\frac{1}{2} (0, ..., 0, \underbrace{1}_{r-th}, 0, ..., 0, \underbrace{1}_{s-th} ..., 0)$ 
\\ \hline
    {\begin{tabular}[c]{@{}c@{}} $(r, s)$-trimmed mean \end{tabular}} &
    $\displaystyle{\frac{1}{n - r - s} \sum_{i=r+1}^{n-s} x_{(i)} }$ & 
    $\frac{1}{n - r - s} (\underbrace{0,...,0}_{r}, \underbrace{ 1,...,1, \ }_{n - r - s} \underbrace{0,...,0}_{s})$ 
\\ \hline
    {\begin{tabular}[c]{@{}c@{}} $(r,s)$-mid mean  \end{tabular}} &
    $\displaystyle{ \frac{1}{ r }\sum_{i= 1}^{r} x_{(i)} + \frac{1}{s} \sum_{i= n- s}^{n} x_{(i)} }$ & 
    $(\underbrace{\frac{1}{r} , ..., \frac{1}{r}}_{r}, \underbrace{0,...,0, \ }_{n-r-s} \underbrace{\frac{1}{s}, ..., \frac{1}{s}}_{s})$   
\\ \hline
    {\begin{tabular}[c]{@{}c@{}} $(r,s)$-winsorized mean \end{tabular}} &
    $\displaystyle{ \frac{1}{n} \Big( r x_{(r+1)} + \sum_{i=r+1}^{n-s} x_{(i)} + s x_{(n-s)} \Big) }$ & 
    $\frac{1}{n} (0,...,0, \underbrace{r+1}_{(r+1)-th}, 1,..., 1, \underbrace{s+1}_{(n-s)-th}, 0,...,0)$
\\ \hline
\end{tabular}
}
\caption{Location or tendency measures and $\lambda$-vectors.}
\label{table:location_lambdas}
\end{table}


On the other hand, dispersion measures quantify the extent to which the given values are spread. These measures use to take value zero when all data are identical and increase as the data become more diverse. 
Dispersion is usually assessed using as basis certain tendency-based measures and very often are sustained on measuring the  deviation from this reference. This is the case of the variance (as the squared deviation with respect to the mean), the standard deviation, or the mean absolute deviation that are computed as an aggregation of the difference of each value in the sample with respect to its mean. Robust measures of dispersion, additionally, aim at being resistant to the influence of outliers. Common robust dispersion measures include the interquartile range and the median absolute deviation. Dispersion measures allow to quantify abstract concepts such as fairness or equity, by measuring the deviations from the sample values to a center \emph{reference} point~\cite{Hoover1941}, or all pairwise deviations~\cite{Gini1921}. For an  overview on fairness or equity measures, the interested reader is referred to \cite{Marsh1994,sen1997economic,young1995equity,bertsimas2011price}.

The notion of \tL-measure introduced above can be naturally extended to derive dispersion measures~\cite{Welsh1990,Huber2009}. 
Furthermore, numerous measures of scale can be directly expressed as a \tL-measure in the shape of (\ref{eq:Lestimator}) by adequately choosing the $\blambda$-vector, as the interquartile range, Gini's mean difference, or efficient scale estimators as those proposed by \cite{Serfling2009}.
In Table \ref{table:dispersion_lambdas} we list some of the most popular dispersion indices that can be cast as \tL-measures or combinations of them, along with the $\blambda$ weights that must be chosen. 
Numerous additional families of measures of scale can be addressed as these can be constructed by combining other families, for instance, as weighted, trimmed, or winsorized versions of both squared and absolute deviations or pairwise differences (e.g., trimmed or winsorized variance).

\begin{table}[ht]
\centering
\footnotesize
\renewcommand{\arraystretch}{1.75}
\begin{tabular}{ccc}
\hline
\texttt{Statistic} & \texttt{Expression} & $\blambda$ \\
\hline
  \small{Range} &
  \footnotesize $\displaystyle{\max x_i - \min x_i}$ & 
  $(-1, 0, ..., 0, 1)$ 
\\ \hline
  \small{\begin{tabular}[c]{@{}c@{}} General interquantile range \\ (e.g., interquartile or interdecile ranges) \end{tabular}} &
  \footnotesize $\displaystyle{x_{(r)} - x_{(s)}, (r < s)}$ & 
  $(0, ..., 0, \underbrace{-1}_{r-th}, 0, ..., 0, \underbrace{1}_{s-th} ..., 0)$ 
\\ \hline
    \small{\begin{tabular}[c]{@{}c@{}} Variance \\ (as squared deviation \\ [-1em] from the mean) \end{tabular}} &
    \footnotesize $\displaystyle{\frac{1}{n} \sum_{i=1}^n \big(x_i - \frac{1}{n} \sum_{j=1}^n x_j \big)^2}$ & 
\\ \hline
    \small{\begin{tabular}[c]{@{}c@{}} Mean squared deviation \\ [-1em] from a location measure $m(x)$ \\ (e.g., from the $q$-th quantile) \end{tabular}} &
    \footnotesize $\displaystyle{\frac{1}{n} \sum_{i=1}^n \big(x_i - m(x) \big)^2}$ & 
\\ \hline
    \small{\begin{tabular}[c]{@{}c@{}} Gini differences or \\ [-1em] mean absolute difference \end{tabular}} &
    \footnotesize $\displaystyle{ \frac{1}{n^2} \sum_{i,j=1}^n \left|x_i-x_j\right| }$  &
    $\frac{2}{n^2}((2i-n-1))_{i \in \{1,...,n\}}$ 
\\ \hline
    \small{\begin{tabular}[c]{@{}c@{}} Mean absolute deviation \\ [-1em] from mean \end{tabular}} &
    \footnotesize $\displaystyle{\frac{1}{n} \sum_{i=1}^n \big| x_i - \frac{1}{n} \sum_{j=1}^n x_j \big|}$ & 
\\ \hline
    \small{\begin{tabular}[c]{@{}c@{}} Mean absolute deviation from \\ [-1em] a location measure $m(x)$ \\ (e.g., from the $q$-th quantile) \end{tabular}} &
    \footnotesize $\displaystyle{\frac{1}{n} \sum_{i=1}^n \big| x_i - m(x) \big|}$ & 
    {\begin{tabular}[c]{@{}c@{}} $\frac{1}{n}(-1, ..., -1, \underbrace{2q-n-1}_{q-th}, 1, ..., 1)$  \\ (e.g., $m(x) = x_{(q)}$)  \end{tabular}}
%
\\ \hline
    \small{\begin{tabular}[c]{@{}c@{}} Median absolute deviation from \\ [-1em] a location measure $m(x)$ \\ (e.g., from the mean) \end{tabular}} &
    \footnotesize $\displaystyle{ median(\big| x_i - m(x) \big| )}$ & 
\\ \hline
\end{tabular}
\caption{Dispersion or scale measures and $\lambda$-vectors.}
\label{table:dispersion_lambdas}
\end{table}




The shape of the \tL-measures presented above has also been applied in other contexts of Statistics, Decision Theory and Data Science, since they can be seen as aggregation operators for a set of values. Expressions in the form of (\ref{eq:Lestimator}) are known as Ordered Weighted Average (OWA) operators. OWA operators provide a parameterized class of mean-type aggregation operators introduced by \cite{Yager1988}. They are used as aggregation measures of unknown values to be determined by a decision problem, whereas in \tL-measures, as introduced above, the values are assumed to be known. In addition, they have been successfully applied to derive robust estimators in other areas of Statistics and Data Science. For instance, the aggregation of (generalized) residuals of linear regression models using OWA operators has given rise to the computation of robust estimators for these models in  the presence of outliers~\cite{Yager2009,  Blanco2018, Flores2020, Blanco2021, Flores2022, durso2022, Puerto2024}. Classification models through support vector machines with OWA aggregation of missclassification errors have also been recently considered~\cite{Maldonado2018, Marin2022}. Furthermore, OWA operators have been applied to the aggregation of expert's information in a Bayesian framework to derive insurance premiums~\cite{Blanco2018BMS}; to model the behavior of Neural networks with fuzzy quantifiers~\cite{Yager1987, Yager1992}, to image segmentation by grouping intensities~\cite{Calvino2022, Munoz2024} and to clustering problems \cite{Cheng2009}, among other fields.

Apart from all these applications of ordered aggregation operators mentioned above, these operators have been widely used in other disciplines of Operations Research. One of the most popular is the use in Facility and Hub Location problems as a unified methodology to cast different distance/cost-based objective functions~\cite{Nickel2006, Puerto2019, Labbe2017, Marin2020, Blanco2023fairness, Ljubic2024, Schnepper2019, Pozo2021, Pozo2024}. Other fields where OWA operators have been successfully applied include: voting problems~\cite{Amanatidis2015, Ponce2018}, portfolio selection~\cite{Cesarone2024} and network design \cite{Puerto2015OrderedMH,Pozo2021} and combinatorial optimization \cite{Fernandez2013,Fernandez2014}.

\section{Optimization based approaches to compute generalized measures}
\label{sec:3}

In this section we provide a general and unified framework based on mathematical optimization models to compute different families of operators based on $\tL$-measures by solving a linear optimization problem. As already mentioned, this modeling approach has several advantages. One of them is that it allows for the integration of the problem into more complex decision making problems, as we will show in later sections of this paper. Additionally, in some cases this approach is computationally competitive with respect to  the standard sorting-based approaches implemented in software packages, as we will see in our computational experiments. 

\subsection{$\tL$-measures as Linear Optimization Problems}
\label{sec:3.1}

Let $\x \in \R^n$ be a given vector, and denote by $N=\{1, \ldots, n\}$ the index set for the components of this vector. The goal of this section is to compute $\ell(\x, \blambda)$ for a given set of weights $\blambda \in \R^n$. Although it can be done by sorting the values of $\x$ in non increasing order and calculating the weighted sum of the resulting vector by $\blambda$, here we propose a method for computing its value by means of solving a mathematical optimization problem. Note that particular cases of these measures such as the mean, the maximum, or the minimum of $\x$ can be  \emph{easily} modeled as the following linear optimization problems:

\begin{minipage}{0.33\linewidth}
\begin{subequations}
\label{form:mean_linear}
\begin{align}
& \text{mean} (\x) =  \min \, \gamma                      \label{cons:mean_lp_0} \\
& \mbox{s.t.} \, \textstyle \dsum_{i\in N} x_i \leq n \gamma, \label{cons:mean_lp_1} \\
& \hspace{0.6cm} \gamma \in \mathbb{R}.                      \label{cons:mean_lp_2}
\end{align}
\end{subequations}
\end{minipage}~
\begin{minipage}{0.33\linewidth}
\begin{subequations}
\label{form:max_linear}
\begin{align}
& \text{max} (\x) = \min \, \gamma                    \label{cons:max_lp_0} \\
& \mbox{s.t.} \;  x_i \leq \gamma, \, \forall  i\in N, \label{cons:max_lp_1} \\
& \hspace{0.9cm}    \gamma \in \mathbb{R}.               \label{cons:max_lp_2}
\end{align}
\end{subequations}
\end{minipage}~
\begin{minipage}{0.33\linewidth}
\begin{subequations}
\label{form:min_linear}
\begin{align}
&  \text{min} (\x) = \max \quad \gamma                   \label{cons:min_lp_0} \\
&  \mbox{s.t.} \;    x_i \geq \gamma, \, \forall i\in N, \label{cons:min_lp_1} \\
&  \hspace{0.9cm}    \gamma \in \mathbb{R}.              \label{cons:min_lp_2}
\end{align}
\end{subequations}
\end{minipage}\\

Although the above models are straightforward to derive, the general case of $\tL$-measure is more complex. Indeed, optimization-based expressions for $\tL$-measures have been widely studied in the literature in different contexts. Before providing our approach we recall two differentiated formulations that have been previously proposed, and that can be applied to compute $\tL$-measures. 

The first model was introduced by \cite{Boland2006}, and consists of an assignment-based integer linear programming model.
\begin{proposition}
Let $\x, \blambda \in \R^n$. Then
\label{form:L_1}
\begin{subequations}
\begin{align}
\ell_1(\x,\blambda) &= &\min & \;\sum_{i,k=1}^n \lambda_k z_{ik} x_{i},                    & \label{cons:L_1_0} \\ 
&&\text{s.t. } & \sum_{i=1}^{n} z_{ik}=1,    & \forall k \in N,     \label{cons:L_1_1}\\
&&& \sum_{k=1}^{n} z_{ik}=1,         & \forall i \in N,      \label{cons:L_1_2} \\
&&& \sum_{i=1}^{n} z_{ik} x_{i} \leq \sum_{i=1}^{n} z_{i,k+1} x_{i}, & \forall k \in N, k<n, \label{cons:L_1_3} \\
&&& z_{ik} \in\{0,1\},     & \forall i, k \in N.   \label{cons:L_1_4} 
\end{align}
\end{subequations}
\end{proposition}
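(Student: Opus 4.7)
The plan is to argue in three steps that the feasible region of the model (\ref{cons:L_1_0})--(\ref{cons:L_1_4}) consists exactly of the $0/1$ matrices that encode a permutation sorting $\x$ in non-decreasing order, and that on this set the objective evaluates to $\ell(\x,\blambda)$.

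First I would observe that constraints (\ref{cons:L_1_1})--(\ref{cons:L_1_2}) together with the binary condition (\ref{cons:L_1_4}) force the matrix $Z=(z_{ik})$ to be a permutation matrix, since each row and column has entries in $\{0,1\}$ summing to one. Thus there exists a unique permutation $\sigma:N\to N$ with $z_{ik}=1$ iff $i=\sigma(k)$, so $\sum_{i=1}^n z_{ik}x_i = x_{\sigma(k)}$ for every $k$.

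Second I would translate constraint (\ref{cons:L_1_3}) into the condition $x_{\sigma(k)} \leq x_{\sigma(k+1)}$ for all $k<n$, i.e.\ the permutation $\sigma$ produces a non-decreasing arrangement of the entries of $\x$. By definition of the order statistics, this means $x_{\sigma(k)} = x_{(k)}$ for every $k$; when the entries of $\x$ are distinct this permutation is unique, and when there are ties the permutation is unique only up to exchanges within equal blocks, but in all feasible cases the vector $(x_{\sigma(1)},\dots,x_{\sigma(n)})$ equals $(x_{(1)},\dots,x_{(n)})$.

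Finally I would evaluate the objective on any feasible $Z$:
\begin{equation*}
\sum_{i,k=1}^n \lambda_k z_{ik} x_i \;=\; \sum_{k=1}^n \lambda_k \sum_{i=1}^n z_{ik} x_i \;=\; \sum_{k=1}^n \lambda_k x_{\sigma(k)} \;=\; \sum_{k=1}^n \lambda_k x_{(k)} \;=\; \ell(\x,\blambda).
\end{equation*}
Since every feasible solution attains the same value, the minimum is well defined and equals $\ell(\x,\blambda)$, which yields $\ell_1(\x,\blambda)=\ell(\x,\blambda)$ as claimed. The only subtle point, and the one I would state explicitly, is the treatment of ties: several feasible $Z$ may exist, but constraint (\ref{cons:L_1_3}) guarantees that every such $Z$ still produces the sorted vector, so the objective value is invariant and the use of $\min$ is merely a modelling device to single out a well-defined scalar.
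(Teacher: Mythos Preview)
Your proof is correct and follows essentially the same approach as the paper: identify the feasible $z$ with permutation matrices via the assignment constraints, use constraint (\ref{cons:L_1_3}) to force the induced permutation to sort $\x$ non-decreasingly, and conclude that the objective reduces to $\sum_k \lambda_k x_{(k)}$. Your treatment is in fact slightly more careful than the paper's, since you explicitly address ties and note that every feasible $Z$ yields the same objective value, making the $\min$ merely a formal device.
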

\begin{proof}
The proof follows by identifying the $z$-variables with the assignment variables that determine the order in the sorting sequence for the values in the sample $\x$:
$$z_{ik} = \begin{cases}1, \text{ if the $i$-th element is in the $k$-th position of the ordered vector}, \\ 0, \text { otherwise, }\end{cases} \forall i, k \in N.$$
These variables are properly defined through constraints \eqref{cons:L_1_1} and \eqref{cons:L_1_2} which ensure that each component of $\x$ is allocated to exactly one position and each position is assigned to exactly one component of $\x$, and \eqref{cons:L_1_3} which enforce that the components after the sorting are in non increasing order. With these variables $x_{(k)} 
= \sum_{i\in N} z_{ik} x_i$ indicates the $k$th component in the sorted vector, and then the objective function is exactly the expression for the $\tL$-measure. 
\end{proof}
Note that this model is an integer linear program with $\mathcal{O}(n^2)$ variables, which may result in computational difficulties when applied to large datasets.

The second model to compute $\ell(\x,\blambda)$ via an optimization problem is based on a formulation provided in \cite{Marin2020} for the  Discrete Ordered Median Location Problem (DOMP) which relies on the $k$-sum representation of the OWA operator. This model is a linear optimization problem that avoids the use of binary variables, which provides a great computational advantage over the previous model.

\hfill

\begin{proposition}
\label{prop:L_ksum}

Let $\x, \blambda \in \R^n$. Then
\begin{subequations}
\label{form:L_2}
\begin{align}
\ell_2(\x,\blambda) &=&\min & \sum_{k \in \Delta^{+}} \Delta_{k} \big((n-k+1) t_{k}+\sum_{i=1}^n v_{ik} \big) + \sum_{k \in \Delta^{-}} \Delta_{k} \sum_{i=1}^n d_{ik} x_{i},  & \label{cons:L_2_0} \hspace{-1cm} \\ 
&&\mbox{s.t. \ } & t_k+v_{ik} \geq x_i,                 & \forall i \in N, k \in \Delta^{+}, \label{cons:L_2_1} \\
&&& \sum_{i=1}^n d_{ik}=n-k+1,           & \forall k \in \Delta^{-},          \label{cons:L_2_2} \\
&&& t_{k} \in \mathbb{R}, v_{ik} \geq 0, & \forall i \in N, k \in \Delta^{+}, \label{cons:L_2_3} \\
&&& 0 \le d_{ik} \leq 1,                 & \forall i \in N, k \in \Delta^{-}. \label{cons:L_2_4}
\end{align}
\end{subequations}
where $\lambda_{0}:=0$ and $\Delta_{k}:=\lambda_{k}-\lambda_{k-1}$ for all $k \in \{1,...,n\}$, $\Delta^{-} = \{k \in \{1,...,n\} | \Delta_k < 0 \}$, and $\Delta^{+} = \{k \in \{1,...,n\} | \Delta_k > 0 \}$.
\end{proposition}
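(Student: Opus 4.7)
The plan is to invoke Abel's summation by parts to rewrite $\ell(\x,\blambda) = \sum_k \lambda_k x_{(k)}$ as a weighted sum of $k$-sums of $\x$, and then to express each $k$-sum by a linear program of an appropriate flavour (primal or dual) chosen according to the sign of its coefficient. First, using $\lambda_0 = 0$ and $\Delta_k = \lambda_k - \lambda_{k-1}$ so that $\lambda_k = \sum_{j=1}^k \Delta_j$, I would switch the order of summation to obtain
\[
\ell(\x,\blambda) = \sum_{k=1}^n \Delta_k S_k, \qquad S_k := \sum_{j=k}^n x_{(j)},
\]
where $S_k$ is the sum of the $n-k+1$ largest components of $\x$. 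Only indices $k \in \Delta^{+} \cup \Delta^{-}$ contribute to this telescoped expression because $\Delta_k = 0$ kills the corresponding summand.

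The next ingredient is the classical LP characterisation of the $k$-sum. As a primal problem, $S_k = \max\{\sum_i d_i x_i : \sum_i d_i = n-k+1,\ 0 \leq d_i \leq 1\}$; the feasible polytope has $0/1$ extreme points that select $n-k+1$ coordinates, so the maximum picks the $n-k+1$ largest components. LP strong duality then yields the equivalent form
\[
S_k = \min\Bigl\{(n-k+1)\,t + \sum_{i=1}^n v_i \,:\, t+v_i \geq x_i,\ v_i \geq 0,\ t \in \R\Bigr\}.
\]

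I would then combine these ingredients according to the sign of $\Delta_k$. For $k \in \Delta^{+}$, I would model $\Delta_k S_k$ by embedding the dual LP into the overall minimisation with positive scalar $\Delta_k$: the outer $\min$ drives the block variables $(t_k, v_{ik})$ to the dual optimum, producing $\Delta_k S_k$. For $k \in \Delta^{-}$, I would instead include the primal objective $\Delta_k \sum_i d_{ik} x_i$ together with the primal feasibility constraints on $d_{ik}$; since $\Delta_k < 0$, minimising $\Delta_k \sum_i d_{ik} x_i$ over the primal feasible set is equivalent to maximising $\sum_i d_{ik} x_i$, and so also produces $\Delta_k S_k$. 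Because the variables $(t_k, v_{ik})_{k \in \Delta^{+}}$ and $(d_{ik})_{k \in \Delta^{-}}$ in formulation \eqref{form:L_2} are completely decoupled across $k$, the overall LP separates into independent per-index subproblems, and summing their optimal values gives exactly $\sum_k \Delta_k S_k = \ell(\x,\blambda)$.

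The main obstacle is the careful sign bookkeeping that simultaneously routes $\Delta^{+}$ indices through the dual formulation and $\Delta^{-}$ indices through the primal one while keeping the whole problem a single minimisation; once this is set up correctly, strong duality and the $0/1$ extremality of the $k$-sum polytope do all the actual work.
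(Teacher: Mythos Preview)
Your proposal is correct and follows essentially the same approach as the paper: Abel summation to rewrite $\ell(\x,\blambda)=\sum_k \Delta_k S_k(\x)$, the primal/dual LP representations of each $k$-sum, and then routing $k\in\Delta^+$ through the dual and $k\in\Delta^-$ through the primal so that a single outer minimisation recovers every $\Delta_k S_k$. If anything, your write-up is more explicit than the paper's about the sign bookkeeping and the decoupling across $k$; the paper simply states the two LPs and says they ``can be combined to derive the proposed model.''
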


\begin{proof}
    Let us define the $k$-sum operator as the sum of the largest $n-k+1$ elements of $\x$, that is:
\begin{equation}
\label{eq:ksums}
S_k(x)=\sum_{\ell=k}^n x_{(\ell)},  
\end{equation}
Using this operator, the expression of the $\tL$-measure can be rewritten as: 
\begin{equation} 
\label{eq:delta}
    \ell(\x,\blambda) = \sum_{k=1}^n \lambda_{k} x_{(k)} = \sum_{k=1}^n \Delta_{k} S_{k}(x),
\end{equation}

Now, observe that computing (\ref{eq:ksums}) is equivalent to solve the following linear problem or its dual  (see e.g.  \cite{Ogryczak2003,Kalcsics2002}):

\begin{minipage}{0.4\linewidth}
\begin{equation}
\begin{aligned}
& \max \sum_{i=1}^n d_i x_i,   \\ \mbox{s.t. \ }
& \sum_{i=1}^n d_i=n-k+1,      \\
& 0 \leq d_i \leq 1, \; \forall i \in N.
\end{aligned}
\end{equation}
\end{minipage}~
\begin{minipage}{0.5\linewidth}
\begin{equation}
\begin{aligned}
& \min \ (n-k+1) t+\sum_{i=1}^n v_i, \\ \mbox{s.t. } 
& t + v_i \geq x_i, \; \forall i \in N, \\
& t\in \mathbb{R}, v_i \geq 0, \; \forall i \in N.
\end{aligned}
\end{equation}
\end{minipage}

where in case $\x \in \R^n_+$, the variable $t$ can be considered nonnegative, namely $t\ge 0$. Therefore, given (\ref{eq:ksums}) and these primal-dual relationships of the $k$-sum operator, they can be combined to derive the proposed model.
\end{proof}

Observe that the model above is a linear optimization model with $O(n^2)$ variables and $O(n^2)$ constraints. 

Finally, in what follows we derive a novel model that we propose to compute $\tL$-measures, as another alternative linear optimization problem. The main ingredient of this model is that it is derived from the computation of the $\tau$-quantiles of $\x$. 

For a given $0<\tau<1$, we define:
\begin{equation}
\label{eq:quantile_formula}    
\text{quantile}_\tau (x) = \arg \min_{\gamma \in \mathbb{R}} \mathcal{L}_{\tau}(x,\gamma),
\end{equation}
where
\begin{equation}
\label{eq:tau_sample_quantile_function}
\mathcal{L}_{\tau}(x,\gamma) = \tau \sum_{x_i \geq \gamma} (x_i-\gamma) +  (1-\tau) \sum_{x_i<\gamma} (\gamma-x_i),
\end{equation}

The reformulation of the $\tau$-quantile problem above as the linear optimization problem presented in \cite{Puerto2024} allows for the alternative linear optimization model described in the following result.

\hfill

\begin{theorem}
\label{theo:L_q}
Let $\x, \blambda \in \R^n$. Then:
\begin{subequations}
\label{form:L_3}
\begin{align}
\ell_3(\x, \blambda) = \min & \sum_{k \in \Delta^{+}} \sum_{i\in N} \Delta_{k} \Big( \frac{k}{n} u_{ik} + (1-\frac{k}{n}) v_{ik} \Big) + \sum_{k \in \Delta^{-}} \sum_{i\in N}  \Delta_{k} \alpha_{ik} x_{i} + \sum_{k \in N} \sum_{i\in N} \Delta_{k} (1-\frac{k}{n}) x_i,& \label{cons:L_3_0}  \\ 
\mbox{s.t. \ } 
& u_{ik} - v_{ik} + \gamma_k = x_i,  \;\; \forall i \in N, k \in \Delta^{+}, \label{cons:L_3_1} \\
& \sum_{i\in N} \alpha_{ik}=0,  \;\; \forall k \in \Delta^{-},          \label{cons:L_3_2} \\
& \gamma_k \in \mathbb{R}, u_{ik}, v_{ik} \geq 0,   \;\;  \forall i \in N, k \in \Delta^{+}, \label{cons:L_3_3} \\
& -\frac{k}{n} \leq \alpha_{ik} \leq 1-\frac{k}{n}, \;\;  \forall i \in N, k \in \Delta^{-}, \label{cons:L_3_4}
\end{align}
\end{subequations}
where $\lambda_{0}:=0$ and $\Delta_{k}:=\lambda_{k}-\lambda_{k-1}$ for all $k \in \{1,...,n\}$, $\Delta^{-} = \{k \in \{1,...,n\} | \Delta_k < 0 \}$, and $\Delta^{+} = \{k \in \{1,...,n\} | \Delta_k > 0 \}$.
\end{theorem}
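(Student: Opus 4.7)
The plan is to mimic the proof of Proposition~\ref{prop:L_ksum}: first decompose $\ell(\x,\blambda)$ as $\sum_{k=1}^n \Delta_k S_k(\x)$, and then replace each $k$-sum $S_k(\x)$ by a linear-programming representation. The novelty compared to Proposition~\ref{prop:L_ksum} is the use of a quantile-regression LP instead of the $k$-sum LP, exploiting the fact that $x_{(k)}$ is the sample $(k/n)$-quantile and is therefore the minimizer of the piecewise-linear loss $\mathcal{L}_{k/n}(\x,\cdot)$ defined in \eqref{eq:tau_sample_quantile_function}.

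First, I would rewrite $\min_\gamma \mathcal{L}_{k/n}(\x,\gamma)$ as a linear program. Introducing $u_i=(x_i-\gamma)^+$ and $v_i=(\gamma-x_i)^+$ yields the primal $\min \sum_i \bigl(\tfrac{k}{n} u_i + (1-\tfrac{k}{n}) v_i\bigr)$ subject to $u_i-v_i+\gamma=x_i$, $u_i,v_i\ge 0$, whose constraints are precisely \eqref{cons:L_3_1} and \eqref{cons:L_3_3}. Direct evaluation at the minimizer $\gamma^*=x_{(k)}$ expresses the primal optimum as an explicit affine function of $S_k(\x)$, $x_{(k)}$, and $T=\sum_i x_i$; this is the identity that will eventually let us recover $S_k$ from the primal value plus an affine correction. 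I would also write the LP dual: its multipliers $\alpha_i$ satisfy $\sum_i\alpha_i=0$ and $\alpha_i\in[k/n-1,\,k/n]$, and under the harmless change of variable $\alpha\mapsto-\alpha$ these bounds become $\alpha_i\in[-k/n,\,1-k/n]$, reproducing \eqref{cons:L_3_2} and \eqref{cons:L_3_4}.

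Next, I would assemble the global program by choosing the primal or the (sign-flipped) dual representation for each $k$ according to the sign of $\Delta_k$. For $k\in\Delta^+$ the coefficient $\Delta_k>0$ preserves the direction of the minimization, so the primal form can be used directly, producing the $u,v$-term of \eqref{cons:L_3_0}. For $k\in\Delta^-$ the coefficient $\Delta_k<0$ would turn a primal min into an unbounded minimization of the resulting linear program, so the dual is used instead: writing $\Delta_k\cdot\bigl(-\min_\alpha \sum_i\alpha_{ik}x_i\bigr)=\min_\alpha \Delta_k\sum_i\alpha_{ik}x_i$ yields exactly the $\alpha$-term of \eqref{cons:L_3_0}, while keeping the overall program bounded.

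Finally, the constant contribution $\sum_{k\in N}\sum_{i\in N}\Delta_k(1-\tfrac{k}{n})x_i$ appearing in \eqref{cons:L_3_0} is designed to absorb the affine offset separating $S_k$ from the quantile-LP value; combined with Abel summation applied to $\sum_k\Delta_k x_{(k)}$, it should collapse the total to $\sum_k\lambda_k x_{(k)}=\ell(\x,\blambda)$. I expect the main obstacle to be exactly this last bookkeeping step: checking that the affine offsets and the telescoping of the $x_{(k)}$ contributions cancel so that only $\sum_k\lambda_k x_{(k)}$ survives, and verifying that for $k\in\Delta^-$ the dual bounds are oriented so that the $\alpha$-subproblem remains bounded when multiplied by the negative $\Delta_k$.
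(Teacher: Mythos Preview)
Your approach is essentially the same as the paper's: decompose $\ell=\sum_k\Delta_kS_k$, link each $S_k$ to the $(k/n)$-quantile LP, and use the primal for $k\in\Delta^+$ and the (sign-flipped) dual for $k\in\Delta^-$. The one difference is that the paper dispatches your ``main obstacle''—the affine identity relating $S_k$ to $\min_\gamma\mathcal{L}_{k/n}(\x,\gamma)$ plus the $(1-k/n)\sum_i x_i$ correction—by invoking Theorem~1 of \cite{Puerto2024} rather than deriving it directly, so the $x_{(k)}$ bookkeeping you flag is not carried out explicitly in the paper either.
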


\begin{proof}
Note first that the $\tau$-quantile problem can be solved as the following linear optimization problem:
\begin{subequations}
\label{form:quant_lp}
\begin{align}
\min &\; \tau \sum_{i=1}^n u_i + (1-\tau) \sum_{i=1}^n v_i,  &   \label{cons:quant_lp_0} \\
\mbox{s.t.\ } & u_i -v_i + \gamma = x_i,  & \forall i \in N,  \label{cons:quant_lp_1} \\
& u_i, v_i \geq 0,                        & \forall i \in N,  \label{cons:quant_lp_2} \\
& \gamma \in \mathbb{R}.                                      \label{cons:quant_lp_3} 
\end{align}
\end{subequations}

Its dual is the following problem:
\begin{subequations}
\label{form:quantile_linear_dual}
\begin{align}
\max & \sum_{i=1}^n \alpha_i x_i, &                  \label{cons:quant_lp_dual_0} \\ \text{s.t.: \ }
& \sum_{i=1}^n \alpha_i = 0,      &                   \label{cons:quant_lp_dual_1} \\
& \tau-1 \leq \alpha_i \leq \tau, & \forall i \in N. \label{cons:quant_lp_dual_2} 
\end{align}
\end{subequations}

Next, by Theorem 1 in \cite{Puerto2024}, we get that for $q=\tau n \in \Z_+$ the following equation holds:
$$\sum_{k=1}^n \lambda_{k} x_{(k)} = \sum_{k=1}^n \Delta_{k} S_{k}(x) = \sum_{k\in \Delta^+} \Delta_{k}  \left({\min_{\gamma_k} \mathcal{L}_{\tau}(x, \gamma_k)} + (1-\tau) \sum_{i = 1}^n x_i\right) + \sum_{k \in \Delta^-} \Delta_{k}  \left({\min_{\gamma_k} \mathcal{L}_{\tau}(x, \gamma_k)} + (1-\tau) \sum_{i = 1}^n x_i\right),$$
where $\tau = k/n$.

Using the above comments, we can decompose the expression $\ell(\x,\blambda)$ into two parts to obtain the objective function \eqref{cons:L_3_0}.  On the one hand, for the term summing up those $k \in \Delta^+$ we use the primal representation of the $\tau$-quantile problem \eqref{cons:quant_lp_0}-\eqref{cons:quant_lp_3}. In the second term ($k \in \Delta^-$) we use the dual representation \eqref{cons:quant_lp_dual_0}-\eqref{cons:quant_lp_dual_2}. 
\end{proof}

Note that formulations \eqref{form:L_2} and \eqref{form:L_3} can both be decomposed as two independent linear optimization problems that can be solved independently, and then to obtain the value of $\ell(\x,\blambda)$ as the sum of the objectives values of both problems, namely:
$$
\ell_i(\x,\blambda) = \ell_i^+(\x,\blambda) - \ell_i^-(\x,\blambda) + c_i, 
$$

for $i \in \{2,3\}$, 
and where $\ell^+_i$ is given by the part of the objective function and the constraints concerning the $\Delta^+$ set, 
$\ell^-_i$ is given by the part of the objective function and the constraints concerning the $\Delta^-$ set, 
and $c_i$ is a constant term that must be added, i.e., 
$c_2=0$ and $c_3 = \sum_{k \in \Delta_k^+} \sum_{i\in N} \Delta_{k} (1-\frac{k}{n}) x_i - \sum_{k \in \Delta_k^-} \sum_{i\in N} \Delta_{k} x_i$.
These linear problems have $O(n|\Delta^+|)$ and $O(n|\Delta^-|)$ variables, respectively.

\subsection{Nested $\tL$-measures as Bilevel Optimization Problems}
\label{sec:3.2}

In this section, we analyze some optimization models that allow us to minimize measures that require a nested $\tL$-measure, in their definition, i.e., given $\x, \blambda \in \R^n$ we aim to compute:
\begin{equation}
\label{eq:NestedMeasure}
\bfrak (\x, \rho) := \min  \rho \big(\x,  \sum_{k=1}^n \lambda_k x_{(k)} \big).
\end{equation}
for a given loss function $\rho$.

We will cast the calculation of this measure as a special class of optimization problems: bilevel optimization problem. A bilevel optimization problem (BOP) represents the relationship in a decision process involving hierarchical decision-making in which an upper-level decision maker aims to optimize its own objective function by selecting a decision that limits the lower-level decision's space, from which the lower-level decision maker needs to select its best decision in terms of its own objective function, see \cite{Bracken1973, Candler1977}.
In case the lower-level problem is a linear optimization program, the most frequent solution method for such BOPs in practice, and the one that we adopt in this section, is to reformulate the model as a single-level problem by means of the Karush-Kuhn-Tucker (KKT) conditions \cite{KKT1939, KKT1951} of the lower-level problem, that allow to incorporate the optimality conditions of this level as constraints in the upper level.  The interested reader is referred to the survey on exact methods for bilevel otimization by \cite{Kleinert2021} for further details.

First, let us rewrite the expression of  $\bfrak (\x, \rho)$ as a BOP. In the lower-level problem, the computation of the $\tL$-measure, $\ell(\x,\blambda) = \sum_{k=1}^n \lambda_k x_{(k)}$, is considered, and which will be represented by a nonnegative variable $\theta \geq 0$, that is
\begin{align}
  \label{cons:N_1}
  \bfrak (\x, \rho) = & \min \rho(\x, \theta) \\ \mbox{s.t. \ } 
  & \theta =  \ell(\x,\blambda) \label{theta:l}
\end{align}

Note that constraint \eqref{theta:l} can be represented as any of the optimization models described in the previous sections. 
Since our goal will be to reformulate this problem as a single level problem, we will focus on the two linear optimization formulations for its computation described in Proposition \ref{prop:L_ksum} and Theorem \ref{theo:L_q}.
Using the $k$-sum reformulation provided in Proposition \ref{prop:L_ksum} we obtain the reformulation detailed in the following result.
\begin{theorem}
\label{thr:nested-1}
Let $\x, \blambda \in \R^n$. Then:
\begin{subequations}
\label{form:N_1}
\begin{align}
\bfrak_1(\x, \rho) = & \min \ \rho(\x, \theta)  \label{cons:N_1_0} \\ \mbox{s.t. \ } 
& \theta = \sum_{k \in \Delta^{+}} |\Delta_{k}| \big((n-k+1) t_{k} + \sum_{i=1}^n v_{ik} \big) - \sum_{k \in \Delta^{-}} |\Delta_{k}| \sum_{i=1}^n d_{ik} x_{i},  \label{cons:N_1_1} \\ 
& \theta = - \sum_{k \in \Delta^{-}} |\Delta_{k}| \big((n-k+1) t_{k} + \sum_{i=1}^n v_{ik} \big) + \sum_{k \in \Delta^{+}} |\Delta_{k}| \sum_{i=1}^n d_{ik} x_{i}, \label{cons:N_1_2} \\ 
& t_k+v_{ik} \geq x_i,                 & \forall i,k \in N,     \label{cons:N_1_3} \\
& \sum_{i=1}^n d_{ik}=n-k+1,           & \forall k \in N,       \label{cons:N_1_4} \\
& t_{k} \in \mathbb{R}, v_{ik} \geq 0, & \forall i,k \in N,     \label{cons:N_1_5} \\
& 0 \le d_{ik} \leq 1,                 & \forall i,k \in N,     \label{cons:N_1_6} \\
& \theta \geq 0.                                                \label{cons:N_1_7}
\end{align}
\end{subequations}
\end{theorem}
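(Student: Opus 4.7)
The plan is to use LP duality applied to the $k$-sum reformulation of Proposition \ref{prop:L_ksum} to embed the constraint $\theta = \ell(\x,\blambda)$ from \eqref{theta:l} into a single-level problem, without invoking KKT conditions. The starting point is the identity $\ell(\x,\blambda) = \sum_{k=1}^n \Delta_k S_k(x)$ from \eqref{eq:delta}, which after separating the sum according to the sign of $\Delta_k$ becomes
$$\ell(\x,\blambda) = \sum_{k \in \Delta^{+}} |\Delta_k|\, S_k(x) - \sum_{k \in \Delta^{-}} |\Delta_k|\, S_k(x).$$
Recall that, by the primal-dual pair used in the proof of Proposition \ref{prop:L_ksum}, for every $k$ and every feasible $(t_k,v_{ik})$ and $d_{ik}$ one has $(n-k+1)t_k + \sum_i v_{ik} \ge S_k(x)$ (weak duality on the min side) and $\sum_i d_{ik} x_i \le S_k(x)$ (weak duality on the max side), with equality if and only if these are optimal.

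Next, I would check what each of \eqref{cons:N_1_1} and \eqref{cons:N_1_2} imposes on $\theta$. Constraint \eqref{cons:N_1_1} pairs the dual (upper) bound on $S_k$ with $k \in \Delta^{+}$ and the primal (lower) bound on $S_k$ with $k \in \Delta^{-}$; weighting both by $|\Delta_k| > 0$ and applying the sign convention above yields $\theta \ge \ell(\x,\blambda)$ for any feasible point of \eqref{form:N_1}. Constraint \eqref{cons:N_1_2} performs the symmetric pairing (dual on $\Delta^{-}$, primal on $\Delta^{+}$) and yields $\theta \le \ell(\x,\blambda)$. Taken together, the two equalities pin down $\theta = \ell(\x,\blambda)$, while \eqref{cons:N_1_7} is consistent with the standing assumption that the nested measure is nonnegative.

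Finally I would verify the converse direction: given $\theta^\star := \ell(\x,\blambda)$, for every $k$ I select any optimal primal solution $d_{ik}^\star$ and any optimal dual solution $(t_k^\star, v_{ik}^\star)$ of the $k$-sum LP; strong duality then turns both \eqref{cons:N_1_1} and \eqref{cons:N_1_2} into equalities and delivers a feasible point of \eqref{form:N_1} with $\theta = \theta^\star$. Hence the projection of the feasible set onto $\theta$ is exactly $\{\ell(\x,\blambda)\}$, and the outer minimization reduces to $\rho(\x,\ell(\x,\blambda))=\bfrak(\x,\rho)$. The main place where care is needed is the sign bookkeeping between $\Delta_k$ and $|\Delta_k|$ and the direction of the weak-duality inequalities on $\Delta^{+}$ versus $\Delta^{-}$, so that the sandwich argument really closes to an equality and no inequality slackens the feasible region beyond $\{\ell(\x,\blambda)\}$.
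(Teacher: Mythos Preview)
Your proposal is correct and follows essentially the same approach as the paper: both rely on LP duality for the $k$-sum formulation, imposing that $\theta$ equals both a primal-type expression \eqref{cons:N_1_1} and a dual-type expression \eqref{cons:N_1_2}, together with primal and dual feasibility, so that strong duality pins $\theta=\ell(\x,\blambda)$. The only cosmetic difference is that the paper dualizes the aggregate problem \eqref{form:L_2} and then rescales, whereas you work directly at the level of the individual $S_k$ problems and spell out the weak-duality sandwich plus the strong-duality feasibility step; the underlying argument is the same.
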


\begin{proof}
Note that the dual problem of the $k$-sum problem \eqref{cons:L_2_0}-\eqref{cons:L_2_4} is:
\begin{subequations}
\begin{align}
& \max \ \sum_{k \in \Delta^+} \sum_{i=1}^n d^*_{ik} x_{i} + \sum_{k \in \Delta^-} ((n-k+1)t_k^* + \sum_{i=1}^n v^*_{ik}) \label{cons:Nd_1_0} \\ \mbox{s.t. \ } 
& \sum_{i=1}^n d^*_{ik} = (n-k+1) \Delta_k ,    & \forall k \in \Delta^+,       \label{cons:Nd_1_2} \\
& 0 \le d_{ik}^* \leq \Delta_k,                 & \forall i,k \in \Delta^+,     \label{cons:Nd_1_3} \\
& t^*_k+v^*_{ik} \leq x_i \Delta_k,             & \forall i,k \in \Delta^-,     \label{cons:Nd_1_4} \\
& t^*_{k} \in \mathbb{R}, v^*_{ik} \geq 0,      & \forall i,k \in \Delta^-.     \label{cons:Nd_1_5}
\end{align}
\end{subequations}
Then, by scaling the decision variables and rewriting:
\begin{equation}
    d_{ik}=\frac{d^*_{ik}}{|\Delta_k|}, i \in N, k \in \Delta^+, 
    t_k = - \frac{t_k^*}{|\Delta_k|}, v_{ik} = - \frac{v_{ik}^*}{|\Delta_k|}, i \in N, k \in \Delta^-, 
\end{equation}
we get that problem \eqref{cons:Nd_1_0}- \eqref{cons:Nd_1_5} is equivalent to:
\begin{subequations}
\begin{align}
& \max \ \sum_{k \in \Delta^+} |\Delta_k| \sum_{i=1}^n d_{ik} x_{i} - \sum_{k \in \Delta^-} |\Delta_k| ((n-k+1)t_k + \sum_{i=1}^n v_{ik}) \label{cons:NdE_1_0} \\ \mbox{s.t. \ } 
& \sum_{i=1}^n d_{ik} = (n-k+1),            & \forall k \in \Delta^+,     \label{cons:NdE_1_1} \\
& 0 \le d_{ik} \leq 1,                      & \forall i,k \in \Delta^+,   \label{cons:NdE_1_2} \\
& t_k+v_{ik} \geq x_i,                      & \forall i,k \in \Delta^-,   \label{cons:NdE_1_3} \\
& t_{k} \in \mathbb{R}, v_{ik} \geq 0,      & \forall i,k \in \Delta^-.   \label{cons:NdE_1_4}
\end{align}
\end{subequations}
Therefore, by considering the lower-level primal constraints \eqref{cons:L_2_1}-\eqref{cons:L_2_4} and dual constraints \eqref{cons:NdE_1_1}-\eqref{cons:NdE_1_4}, and both objective functions equivalence, we get the single-level formulation \eqref{cons:N_1_0}-\eqref{cons:N_1_7}
\end{proof}

For the $\tau$-quantile formulation (Theorem \ref{theo:L_q}), and using duality conditions similar to those in the previous result, we obtain the single-level reformulation described in the following result.
\begin{theorem}
\label{thr:nested-2}
Let $\x, \blambda \in \R^n$. Then:
\begin{subequations}
\label{form:N_2}
\begin{align}
\bfrak_2 (\x, \rho) = & \min \ \rho(\x, \theta)  \label{cons:N_2_0} \\ \mbox{s.t. \ } 
& \theta = \sum_{k \in \Delta^{+}} \sum_{i\in N} |\Delta_{k}| \Big( \frac{k}{n} u_{ik} + (1-\frac{k}{n}) v_{ik} \Big) + \sum_{k \in \Delta^{-}} \sum_{i\in N} |\Delta_{k}| \alpha_{ik} x_{i} \notag \\  
& \hspace{0.5cm} + \sum_{k \in N} \sum_{i\in N} \Delta_{k} (1-\frac{k}{n}) x_i, \label{cons:N_2_1} \\ 
& \theta = - \sum_{k \in \Delta^{-}} \sum_{i\in N} |\Delta_{k}| \Big( \frac{k}{n} u_{ik} + (1-\frac{k}{n}) v_{ik} \Big) - \sum_{k \in \Delta^{+}} \sum_{i\in N} |\Delta_{k}| \alpha_{ik} x_{i}  \notag \\ 
& \hspace{0.5cm} + \sum_{k \in N} \sum_{i\in N} \Delta_{k} (1-\frac{k}{n}) x_i, \label{cons:N_2_2} \\ 
& u_{ik} - v_{ik} + \gamma_k = x_i,                 & \forall i,k \in N,  \label{cons:N_2_3} \\
& \sum_{i\in N} \alpha_{ik}=0,                      & \forall k \in N,    \label{cons:N_2_4} \\
& \gamma_k \in \mathbb{R}, u_{ik}, v_{ik} \geq 0,   & \forall i, k \in N, \label{cons:N_2_5} \\
& -\frac{k}{n} \leq \alpha_{ik} \leq 1-\frac{k}{n}, & \forall i, k \in N, \label{cons:N_2_6} \\
& \theta \geq 0.                                                          \label{cons:N_2_7}
\end{align}
\end{subequations}
\end{theorem}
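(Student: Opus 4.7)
The plan is to mirror the argument of Theorem \ref{thr:nested-1}, replacing the $k$-sum LP representation of $\ell(\x, \blambda)$ by the $\tau$-quantile representation established in Theorem \ref{theo:L_q}. Starting from the bilevel formulation \eqref{cons:N_1}--\eqref{theta:l}, I would express the lower-level equality $\theta = \ell(\x, \blambda)$ using the identity
\[
\ell(\x, \blambda) \;=\; \sum_{k=1}^{n} \Delta_k \Big( \min_{\gamma_k} \mathcal{L}_{k/n}(\x, \gamma_k) + \big(1 - \tfrac{k}{n}\big) \sum_{i=1}^{n} x_i \Big),
\]
from Theorem \ref{theo:L_q}, in which the constant term on the right collects the contributions that do not depend on the inner optimizing variables of the quantile problem.

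Next, I would invoke LP duality on each inner $\tau$-quantile problem: the primal \eqref{form:quant_lp} with $\tau = k/n$ has the same optimal value as its dual \eqref{form:quantile_linear_dual}. After a reparametrization of the dual variables (a sign change that is absorbed into the objective coefficients), this yields the equality $\sum_i \alpha_{ik} = 0$ and the bounds $-\tfrac{k}{n} \leq \alpha_{ik} \leq 1 - \tfrac{k}{n}$ that appear in \eqref{cons:N_2_4}--\eqref{cons:N_2_6}. Weak duality then provides, for every primal-feasible $(u_{ik}, v_{ik}, \gamma_k)$ and every dual-feasible $\alpha_{ik}$, a two-sided bracket of $\mathcal{L}_{k/n}(\x, \gamma_k^\star)$ between the primal objective $\tfrac{k}{n}\sum_i u_{ik} + (1-\tfrac{k}{n})\sum_i v_{ik}$ and the reparametrized dual objective $\sum_i \alpha_{ik} x_i$, with equality at any primal-dual optimum.

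The crux is then the same trick as in Theorem \ref{thr:nested-1}: multiplying the bracket by $\Delta_k$ produces bounds on $\Delta_k \mathcal{L}_{k/n}$ whose direction depends on the sign of $\Delta_k$. On $\Delta^+$ the primal objective is an upper bound and the (reparametrized) dual is a lower bound on $\Delta_k \mathcal{L}_{k/n}$, while on $\Delta^-$ the factor $\Delta_k = -|\Delta_k|$ swaps the roles. Summing the upper bounds over all $k$ and adding the constant $\sum_{k \in N} \sum_{i \in N} \Delta_k (1-\tfrac{k}{n}) x_i$ reproduces exactly constraint \eqref{cons:N_2_1}, which enforces $\theta \geq \ell(\x, \blambda)$; summing the lower bounds reproduces \eqref{cons:N_2_2} and enforces $\theta \leq \ell(\x, \blambda)$. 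Combined with the primal and dual feasibility constraints \eqref{cons:N_2_3}--\eqref{cons:N_2_6}, strong duality forces equality and gives $\theta = \ell(\x, \blambda)$, so the single-level model \eqref{form:N_2} is equivalent to the original bilevel problem.

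The main obstacle I anticipate is bookkeeping rather than mathematical depth: carefully tracking the sign flips induced by $\Delta_k < 0$ on $\Delta^-$, the reparametrization of the dual variable that shifts its bounds from $[\tfrac{k}{n} - 1, \tfrac{k}{n}]$ to $[-\tfrac{k}{n}, 1-\tfrac{k}{n}]$, and ensuring that the constant term $\sum_k \Delta_k (1-\tfrac{k}{n}) \sum_i x_i$ appears identically in both \eqref{cons:N_2_1} and \eqref{cons:N_2_2} without leaking into the weak-duality bounds. Once this algebra is organized, the proof is a direct translation of the argument used for Theorem \ref{thr:nested-1}.
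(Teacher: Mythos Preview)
Your proposal is correct and takes essentially the same approach as the paper: the paper does not give a detailed proof of Theorem~\ref{thr:nested-2} but simply states that it follows from ``duality conditions similar to those in the previous result'' applied to the $\tau$-quantile formulation of Theorem~\ref{theo:L_q}, which is precisely what you outline. Your write-up is in fact more explicit than the paper's own treatment, spelling out the weak/strong duality bracket and the sign bookkeeping on $\Delta^{\pm}$ that the paper leaves implicit.
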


Note that the two proposed single-level optimization models to compute $\bfrak (\x, \rho)$ consist of linear constraints, and the non-linearity arises when $\rho$ is nonlinear. This is true even for the most popular loss functions, the absolute or quadratic deviations, or shape measures such as the skewness or kurtosis coefficients, as well as some of the already introduced functions.

In the following, we give an illustrative example detailing how to proceed in the important case where the loss function is the absolute deviation.

\hfill

\begin{example}
    Let us consider the loss function $\rho(\x,\theta)=\dsum_{i\in N}|\x_i-\theta|$ which induces the mean absolute deviation with respect to any  $\tL$-measure:
\begin{equation}
    MeanAD_{\blambda}(\x)  = \frac{1}{n} \sum_{i=1}^n \left|\x_i - \sum_{k=1}^n \lambda_k x_{(k)}\right|,
\end{equation}
Introducing variables $r_{i}^{+}, r_{i}^{-} \geq 0$, $i \in N$, to linearize the absolute values, we get that each of the addends in the above sum can be expressed as:
$$
\begin{aligned}
&  |x_i - \sum_{k=1}^n \lambda_k x_{(k)} | = r_{i}^{+}+r_{i}^{-}\\
& x_i - \sum_{k=1}^n \lambda_k x_{(k)} =  r_{i}^{+}-r_{i}^{-}, \\
& r_{i}^{+}, r_{i}^{-} \geq 0, r_{i}^{+} r_{i}^{-}=0, i \in N.
\end{aligned}
$$

Thus, we have that: 
\begin{subequations}
\label{form:bilevel_2}
\begin{align}
  MeanAD_{\blambda}(\x) = & \min \sum_{i=1}^n (r_{i}^{+} + r_{i}^{-}) &         \label{cons:BL_2_0} \\ \text{s.t.: } 
  & r_{i}^{+} - r_{i}^{-} = x_i - \theta,                          & i \in N,   \label{cons:BL_2_1} \\
  & r_{i}^{+}, r_{i}^{-} \geq 0,                                   & i \in N,   \label{cons:BL_2_3} \\
  & (r_{i}^{+}, r_{i}^{-}): \SOS,                                  & i \in N,   \label{cons:BL_2_4} \\
  & \theta = \sum_{k=1}^n \lambda_k x_{(k)}
\end{align}
\end{subequations}

where, \SOS  stands for the Special Ordered Sets of type 1, introduced by \cite{Beale1970}, which are a special set of variables for which at most one of them can take a non-zero value, while all other remain zero.
Then, following the same rationale as in Proposition \ref{prop:L_ksum} or Theorem \ref{theo:L_q}, one can rewrite the problem as a single-level linear optimization problem with \SOS constraints.
\end{example}

Finally, we would like to highlight that tailored models for cases where the nested measure (the $\tL$-measure) has some simplified form, as the maximum, the minimum, the mean or the quantile, give rise to simpler optimization models  (\eqref{form:max_linear}, \eqref{form:min_linear}, \eqref{form:mean_linear} or \eqref{form:quant_lp}, respectively). 

\section{A new family of quadratic measures, $\QO$-measures}
\label{sec:4}

In this section we introduce a new family of measures that extend the definition of $\tL$-measure by means of quadratic terms, and that include as particular cases the most popular measures of scale, as well as the linear $\tL$-measures.

\hfill 

\begin{definition}
\label{eq:QO_M}
Let $\x \in \mathbb{R}^n$, and a  matrix $\M = (m_{ij})_{i,j=1}^n \in \mathbb{R}^{n\times n}$, we define the quadratic ordered operator, namely $\QO$-measure, as:
\begin{equation}
\QO(\x, \M) = \sum_{i,j=1}^n m_{ij} x_{(i)} x_{(j)} 
\end{equation}
\end{definition}

Note that particular choices of $\M$ result in different measures which require quadratic terms. 
It is straightforward to check that $\QO(x, \M)$ is continuous,  symmetric, and (strictly) convex if and only if $\M$ is a (positive) semidefinite matrix. 
Quadratic terms allow us to measure interactions between the different elements of the vector, defined as product of the components, that is, one can consider the interaction between the maximum and the minimum value, $x_{(1)} x_{(n)},$
or the sum of the pairs of consecutive values, 
$x_{(1)} x_{(2)}(x) + x_{(2)} x_{(3)} + ... + x_{(n-1)} x_{(n)}.$
Note that, this approach can be generalized to include interactions of other orders by considering tensors instead of matrices.

Before analyzing the optimization-based approach that we propose to compute this type of operator, we provide some well-known interesting measures that can be rewritten as a $\QO$-measure.

\hfill

\begin{example}[Squared Deviation with respect to a $\tL$-measure]\label{ex:2}
Let us consider the nested $\tL$-measure where $\rho$ is the mean squared deviation function and $\ell (\x, \blambda) = \sum_{k=1}^n \lambda_k x_{(k)}$, this is:
$$\bfrak(\x,\rho) = \sum_{i = 1}^n \big( x_i - \ell (\x, \blambda) \big)^2.$$
Note that:
$$
\begin{aligned}
\sum_{i = 1}^n \big( x_i - \ell (\x, \blambda) \big)^2  
& = \sum_{i = 1}^n \big( x_i - \sum_{j = 1}^n \lambda_j x_{(j)} \big)^2 =  \sum_{i = 1}^n \big( x_{(i)} - \sum_{j = 1}^n \lambda_j x_{(j)} \big)^2 \\
& = \sum_{i=1}^n x_{(i)}^2 + \sum_{i=1}^n \Big( \sum_{j = 1}^n \lambda_j x_{(j)} \Big)^2 - \sum_{i=1}^n2 x_{(i)} \sum_{j=1}^n \lambda_j x_{(j)} \\
& = \sum_{i=1}^n x_{(i)}^2 + n \sum_{i=1}^n \lambda_{i}^2 x_{(i)}^2 + n \sum_{\substack{i,j=1\\ i\neq j} }^n \lambda_i \lambda_j x_{(i)} x_{(j)} - 2  \sum_{i=1}^n \lambda_{i} x_{(i)}^2 - 2 \sum_{\substack{i,j=1\\ i\neq j} }^n \lambda_j x_{(i)} x_{(j)} \\
& = \sum_{i=1}^n (1+n \lambda_i^2 - 2 \lambda_i) x_{(i)}^2 + \sum_{\substack{i,j=1\\ i\neq j} }^n (n\lambda_i \lambda_j - 2 \lambda_j) x_{(i)} x_{(j)}.
\end{aligned}
$$
Thus, choosing $\M = (m_{ij})_{i,j = 1}^n \in \mathbb{R}^{n \times n}$ with
$$m_{ij} = \begin{cases}
1+n \lambda_i^2 - 2 \lambda_i,      & i = j, \\
n\lambda_i \lambda_j - 2 \lambda_j,  & i \neq j,
\end{cases}$$
we get that $\bfrak(\x,\rho) = \QO(\x, \M)$.
\end{example}

\begin{example}[Variance]
The variance of the vector $\x$ can be obtained as a quadratic ordered measure by choosing:
$$\M = \frac{1}{n} \cdot \begin{pmatrix}
1-\frac{1}{n}  & -\frac{1}{n}  & \ldots       & -\frac{1}{n} \\ 
-\frac{1}{n}   & 1-\frac{1}{n} & \ddots       & \vdots       \\ 
\vdots         & \ddots        & \ddots       & -\frac{1}{n} \\ 
-\frac{1}{n}   & \ldots        & -\frac{1}{n} & 1-\frac{1}{n}
\end{pmatrix}$$
then $\QO(\x , \M)$ represents the variance of the values, i.e.,
$$\displaystyle{\frac{1}{n} \sum_{i=1}^n \big(x_i - \frac{1}{n} \sum_{j=1}^n x_j \big)^2}.$$
\end{example}

\begin{example}[$(r,s)$-trimmed variance]
The proposed extension allows to express general measures based on deviations, as shown in Table \ref{table:dispersion_lambdas}. This is also the case of trimmed or winsorized variances (see, e.g., \cite{Welsh1990, Wilcox2011}) that can easily be tackled within this framework. For instace, the  $(r,s)$-trimmed variance, whose expression is proportional to
\begin{equation}
    \label{eq:trim-variance}
    \sum_{i=r+1}^{n-s} \big( x_{(i)} - \frac{1}{n - r - s} \sum_{i=r+1}^{n-s} x_{(i)} \big)^2 , 
\end{equation}

can be computed by means of
$$m_{ij} = \begin{cases}
1-\frac{1}{n-r-s}, & r+1 \leq i = j \leq n-s, \\
-\frac{1}{n-r-s},  & r+1 \leq i \neq j \leq n-s, \\
0,                 & otherwise.
\end{cases}$$
\end{example}

\begin{example}[$(r,s)$-winsorized variance]
The $(r,s)$-winsorized variance is proportional to,
\begin{equation}
\hspace{-3cm}  r \big(x_{(r+1)} - W_{(r,s)}(\x)\big)^2 + \sum_{i=r+1}^{n-s} \big(x_{(i)} - W_{(r,s)}(\x) \big)^2 + s \big(x_{(n-s)} - W_{(r,s)}(\x) \big)^2 \hspace{-3cm} \label{eq:win-variance}
\end{equation}
where $W_{(r,s)}(\x)$ is the $(r,s)$-winsorized mean in Table \ref{table:location_lambdas}, by means of 
$$m_{ij} = \begin{cases}
1+r-\frac{(r+1)^2}{n}, & i = j = r+1, \\
1+s-\frac{(s+1)^2}{n}, & i = j = n-s, \\
1-\frac{1}{n},          & r+1 < i=j < n-s, \\
-\frac{(rs+r+s+1)}{n}, & i=r+1 \text{ and } j=n-1 \text{ (and vicerversa)}, \\
-\frac{(r+1)}{n},      & i=r+1 \text{ and } r+1<j<n-s \text{ (and vicerversa)}, \\
-\frac{(s+1)}{n},      & i=n-s \text{ and } r+1<j<n-s \text{ (and vicerversa)}, \\
-\frac{1}{n},         & r+1 < i \neq j < n-s, \\
0,                     & otherwise.
\end{cases}$$
\end{example}

The wide range of  measures that can be derived from the new family of statistics that we introduced would be only useful if they can be efficiently calculated and capable of being inserted into complex decision-making problems where the values of the vector $\x$ are part of the decision to make.

As already analyzed for $\tL$-measures and nested measures in the previous sections. In what follows, we provide unified suitable mathematical optimization models  to compute $\QO_{n}(\x, \M)$ for any matrix $\M$.
In order to provide an integer mathematical programming formulation to compute this family of quadratic ordered measures, let us resort to the formulation \eqref{form:L_1} introduced in Section \ref{sec:3.1}, and consider the binary assignment variables $z_{ik} \in \{0,1\}, \forall i, k \in N,$ and the set of constraints:
$$
\mathcal{O}(\x) = \Big\{z \in \{0,1\}^{n \times n}: \sum_{i\in N} z_{ik}=1,  \forall k \in N,  \sum_{k\in N} z_{ik}=1, \forall i \in N, \sum_{i\in N} z_{ik} x_{i} \leq \sum_{i\in N} z_{i,k+1} x_{i}, \forall k \in N (k<n)\Big\},
$$
that, as already mentioned determines the set of feasible assignments for the different values in $\x$ to its sorted position, i.e., any $\bar z \in \mathcal{O}(\x)$ with $\bar z_{ik}=1$ indicates that $x_i$ is sorted in $k$-th position in $\{x_1, \ldots, x_n\}$. Note that if all the components of $\x$ are different, this set consists of a single vector, otherwise, in case of ties, the possible sorting of $\x$ is not unique, and then, multiple vectors may be included in the set $\mathcal{O}(\x)$.

The computation of the $\QO$-measure for the vector $\x$ can be addressed via the following integer quadratic formulation:
\begin{align}
\min_{z \in \mathcal{O}(x)} & \  \sum_{i,k \in N} \sum_{j,\ell \in N} m_{k \ell} x_i x_j z_{ik} z_{j \ell}, & \label{cons:Q_0}
\end{align}

Note that this nonlinear problem can be linearized via standard techniques, as McCormick envelopes~\cite{Mccormick1976}. To this end, let us consider binary variables denoting the product of the variables, i.e.,  $y_{ijkl} = z_{ik} z_{j \ell} \in \{0,1\}$, for all $i, j, k, \ell \in N$, then a linear formulation to compute (\ref{eq:QO_M}) can be introduced as follows:
\begin{subequations}
\label{form:Q_linearized}
\begin{align}
\min_{z \in \mathcal{O}(\x)} & \  \sum_{i,k =1}^n \sum_{j,\ell =1}^n m_{k \ell} x_i x_j y_{ijk\ell}, & \label{cons:Qlin_0}  \\ 
\mbox{s.t. } & y_{ijk\ell} \leq z_{ik},    & \forall i, k \in N,       \label{cons:Qlin_1} \\
& y_{ijk\ell} \leq z_{j \ell},             & \forall j,\ell \in N,     \label{cons:Qlin_2} \\
& y_{ijk\ell} \geq z_{ik} + z_{j \ell} -1, & \forall i,j,k,\ell \in N, \label{cons:Qlin_3} \\
& y_{ijk\ell}  \geq 0,                     & \forall i,j,k,\ell \in N. \label{cons:Qlin_4} 
\end{align}
\end{subequations}

Observe that, although the measures provided in Section \ref{sec:3.2} can also result in quadratic models (based on the expression of $\rho$), their objective function would lead to a nonconvex quadratic objective function (implied by the product of continuous variables). In the best-case scenario, only few of such products are present, preserving the model’s competitiveness. In contrast, the framework proposed in this section offers a greater flexibility and simplicity but requires binary variables with two indices, which may increase computational complexity and limit scalability. Given the problem's complexity, the choice between these frameworks depends on the specific application.

Note that one can easily extend this type of  quadratic measures to include the $\tL$-measures described above, by considering non-squared matrices $(\lambda|\M) \in \mathbb{R}^{n \times (n+1)}$, in whose case, the $\QO$ problem described above results in the following measure:
$$
\QO(\x, (\lambda|\M)) = \sum_{k=1}^n \lambda_k x_{(k)} + \sum_{i,j=1}^n m_{ij} x_{(i)} x_{(j)} 
$$
for $\x \in \R^n$.

Thus, taking $\M=\mathbf{0}$, $\QO(\x, (\lambda|\M)) = \ell(\x,\lambda)$, whereas if $\lambda=0$, $\QO(\x, (\lambda|\M)) = \QO(\x, \M)$. This general measure can be formulated as the following optimization problem by means of the $z$-variables described above:
\begin{align}
\min_{z \in \mathcal{O}(\x)} & \  \sum_{i,k=1}^n \lambda_k x_i z_{ik} + \sum_{i,k =1}^n \sum_{j,\ell =1}^n m_{k \ell} x_i x_j z_{ik} z_{j \ell}. & \label{eq:opt_pb2}
\end{align}

\section{Computational experiments}
\label{sec:5}

This section presents the results obtained from our computational experiments, conducted to empirically compare the proposed models. Our formulations were implemented in Python (version 3.10) using the Gurobi Optimizer (version 9.5) as the MIP solver. The experiments were conducted on a MacPro server equipped with a 2.7 GHz Intel Xeon W processor (24 cores) and 192 GB RAM, utilizing 8 threads per run. A time limit of 3600 seconds was set for computations.
For evaluating different metric computations, including the $\tL$-measures, nested $\tL$-measures, and $\QO$-measures, we generated 20 data vectors of sizes 10, 20, 30, 40, 50, 100, 500 and 1000. Each element in the vectors $(x_i)_{i=1}^n$ followed a uniform distribution $U([0,100])$ and was rounded to three decimal digits.

Two primary metrics were used to assess model efficiency. First, \texttt{Time} represents the average time (in seconds) required to compute the best solution for an instance over 20 trials. Second,  \texttt{Accuracy} shows the number of instances in which the value obtained by our models closely matches the reference values computed using Python’s state-of-the-art functions from the \texttt{numpy}, \texttt{statictics}, and \texttt{scipy} packages. Specifically, given $\texttt{f}_{\text{python}}$ as the value computed using the Python package and $\texttt{f}_{\text{m}}$ as the solution obtained by our models, \texttt{Accuracy} shows the number of instances for which
$(\texttt{f}_{\text{python}} - \texttt{f}_{\text{m}}) \leq 10^{-5}$. In the worst case, our solution will be suboptimal with respect to the one computed by python, i.e, $\texttt{f}_{\text{m}} \leq \texttt{f}_{\text{python}}$.  

Table \ref{tab:results-Lmeasures} shows the results for some $\tL$-measures, which are: the \texttt{mean}, $(n/10,n/10)$-trimmed mean (\texttt{trim-mean}), $(n/10,n/10)$-winsorized mean (\texttt{win-mean}), \texttt{range}, $(n/10,n/10)$-mid mean (\texttt{mid-mean}), \texttt{median}, first and third quartiles (\texttt{1quartile} and \texttt{3quartile}), the interquartile range (\texttt{iqr}), mean absolute deviations with respect to the median (\texttt{madev-median}) and the Gini differences (\texttt{gini-difs}), which expressions can be found in Tables \ref{table:location_lambdas} and \ref{table:dispersion_lambdas}. The tested models are $\ell_1$, which is the one derived from formulation \eqref{form:L_1}, $\ell_2$ obtained from formulation \eqref{form:L_2}, and $\ell_3$ obtained from formulation \eqref{form:L_3}. The results show that while $\ell_1$ can handle small-scale problems effectively, it struggles with larger datasets, reaching the time limit of 3600 seconds. Conversely, $\ell_2$ and $\ell_3$ demonstrate significantly faster computational times, maintaining high accuracy across all instances.

\begin{table}[h!]
\centering
\tiny
\renewcommand{\arraystretch}{0.75}
\begin{tabular}{@{}cc|rrrrrrrr|rrrrrrrr@{}}
\toprule
\multicolumn{1}{c}{\multirow{2}{*}{Measure}} &
  \multicolumn{1}{c}{\multirow{2}{*}{Model}} &
  \multicolumn{8}{c|}{Time} &
  \multicolumn{8}{c}{Accuracy} \\ \cmidrule(l){3-18} 
\multicolumn{1}{c}{} &
  \multicolumn{1}{c}{} &
  10 & 20 & 30 & 40 & 50 & 100 & 500 & 1000  & 10 & 20 & 30 & 40 & 50 & 100 & 500 & 1000  \\ \midrule
\multirow{3}{*}{\texttt{mean}}         & $\ell_1$ & 0.0012 & 0.0055 & 0.0173 & 0.0437 & 0.1533 &  52.5193 & 3600.0000 & 3600.0000  & 20 & 20 & 20 & 20 & 20 & 20 & 20 & 20   \\
                                       & $\ell_2$ & 0.0001 & 0.0001 & 0.0002 & 0.0002 & 0.0002 &   0.0007 &    0.0051 &    0.0081  20 & 20 & 20 & 20 & 20 & 20 & 20 & 20  &20 \\
                                       & $\ell_3$ & 0.0001 & 0.0001 & 0.0001 & 0.0001 & 0.0001 &   0.0002 &    0.0029 &    0.0005  20 & 20 & 20 & 20 & 20 & 20 & 20 & 20 &20  \\* \midrule
\multirow{3}{*}{\texttt{trim-mean}}    & $\ell_1$ & 0.0011 & 0.0055 & 0.0173 & 0.0435 & 0.1236 &  83.1352 & 3600.0000 & 3600.0000 &   20 & 20 & 20 & 20 & 20 & 20 &  0 &  0   \\
                                       & $\ell_2$ & 0.0001 & 0.0001 & 0.0002 & 0.0002 & 0.0003 &   0.0007 &    0.0083 &    0.0118  20 & 20 & 20 & 20 & 20 & 20 & 20 & 20 &20  \\
                                       & $\ell_3$ & 0.0001 & 0.0001 & 0.0002 & 0.0002 & 0.0003 &   0.0009 &    0.0103 &    0.0106  20 & 20 & 20 & 20 & 20 & 20 & 20 & 20 &20  \\* \midrule
\multirow{3}{*}{\texttt{win-mean}}     & $\ell_1$ & 0.0012 & 0.0055 & 0.0172 & 0.0437 & 0.1450 &  55.1646 & 3600.0000 & 3600.0000 &   20 & 20 & 20 & 20 & 20 & 20 &  0 &  0  \\
                                       & $\ell_2$ & 0.0002 & 0.0002 & 0.0003 & 0.0003 & 0.0004 &   0.0014 &    0.0097 &    0.0195 & 20 & 20 & 20 & 20 & 20 & 20 & 20 & 20   \\
                                       & $\ell_3$ & 0.0002 & 0.0002 & 0.0003 & 0.0004 & 0.0005 &   0.0015 &    0.0155 &    0.0281 & 20 & 20 & 20 & 20 & 20 & 20 & 20 & 20   \\* \midrule
\multirow{3}{*}{\texttt{range}}        & $\ell_1$ & 0.0011 & 0.0055 & 0.0173 & 0.0436 & 0.1528 &  52.6488 & 3600.0000 & 3600.0000   & 20 & 20 & 20 & 20 & 20 & 20 &  0 &  0   \\
                                       & $\ell_2$ & 0.0002 & 0.0002 & 0.0002 & 0.0003 & 0.0003 &   0.0010 &    0.0064 &    0.0090 & 20 & 20 & 20 & 20 & 20 & 20 & 20 & 20   \\
                                       & $\ell_3$ & 0.0002 & 0.0002 & 0.0003 & 0.0003 & 0.0004 &   0.0014 &    0.0094 &    0.0162   & 20 & 20 & 20 & 20 & 20 & 20 & 20 & 20   \\* \midrule
\multirow{3}{*}{\texttt{mid-mean}}     & $\ell_1$ & 0.0011 & 0.0055 & 0.0174 & 0.0437 & 0.1464 &  55.8178 & 3600.0000 & 3600.0000   & 20 & 20 & 20 & 20 & 20 & 20 &  0 &  0   \\
                                       & $\ell_2$ & 0.0002 & 0.0002 & 0.0002 & 0.0003 & 0.0003 &   0.0007 &    0.0064 &    0.0154   & 20 & 20 & 20 & 20 & 20 & 20 & 20 & 20   \\
                                       & $\ell_3$ & 0.0002 & 0.0002 & 0.0002 & 0.0003 & 0.0004 &   0.0010 &    0.0088 &    0.0122   & 20 & 20 & 20 & 20 & 20 & 20 & 20 & 20   \\* \midrule
\multirow{3}{*}{\texttt{median}}       & $\ell_1$ & 0.0011 & 0.0054 & 0.0175 & 0.0434 & 0.1458 &   3.2773 & 3600.0000 & 3600.0000 &   20 & 20 & 20 & 20 & 20 & 20 &  0 &  0  \\
                                       & $\ell_2$ & 0.0002 & 0.0001 & 0.0002 & 0.0003 & 0.0003 &   0.0012 &    0.0088 &    0.0148   & 20 & 20 & 20 & 20 & 20 & 20 & 20 & 20   \\
                                       & $\ell_3$ & 0.0001 & 0.0002 & 0.0002 & 0.0003 & 0.0003 &   0.0013 &    0.0142 &    0.0220   & 20 & 20 & 20 & 20 & 20 & 20 & 20 & 20   \\* \midrule
\multirow{3}{*}{\texttt{1quartile}}    & $\ell_1$ & 0.0011 & 0.0055 & 0.0172 & 0.0438 & 0.1665 & 107.7195 & 3600.0000 & 3600.0000 &   20 & 20 & 20 & 20 & 20 & 20 &  0 &  0   \\
                                       & $\ell_2$ & 0.0002 & 0.0002 & 0.0002 & 0.0002 & 0.0003 &   0.0009 &    0.0086 &    0.0141   & 20 & 20 & 20 & 20 & 20 & 20 & 20 & 20   \\
                                       & $\ell_3$ & 0.0001 & 0.0001 & 0.0002 & 0.0002 & 0.0003 &   0.0010 &    0.0119 &    0.0157   & 20 & 20 & 20 & 20 & 20 & 20 & 20 & 20   \\* \midrule
\multirow{3}{*}{\texttt{3quartile}}    & $\ell_1$ & 0.0012 & 0.0055 & 0.0172 & 0.0436 & 0.1350 &  51.9655 & 3600.0000 & 3600.0000 &   20 & 20 & 20 & 20 & 20 & 20 &  0 &  0   \\
                                       & $\ell_2$ & 0.0002 & 0.0001 & 0.0002 & 0.0002 & 0.0003 &   0.0009 &    0.0070 &    0.0106   & 20 & 20 & 20 & 20 & 20 & 20 & 20 & 20   \\
                                       & $\ell_3$ & 0.0001 & 0.0002 & 0.0002 & 0.0003 & 0.0003 &   0.0012 &    0.0122 &    0.0158   & 20 & 20 & 20 & 20 & 20 & 20 & 20 & 20   \\* \midrule
\multirow{3}{*}{\texttt{iqr}}          & $\ell_1$ & 0.0012 & 0.0055 & 0.0170 & 0.0433 & 0.1405 &  58.9756 & 3600.0000 & 3600.0000 &   20 & 20 & 20 & 20 & 20 & 20 &  0 &  0   \\
                                       & $\ell_2$ & 0.0002 & 0.0002 & 0.0003 & 0.0004 & 0.0005 &   0.0013 &    0.0129 &    0.0325   & 20 & 20 & 20 & 20 & 20 & 20 & 20 & 20   \\
                                       & $\ell_3$ & 0.0002 & 0.0002 & 0.0003 & 0.0005 & 0.0006 &   0.0018 &    0.0178 &    0.0426   & 20 & 20 & 20 & 20 & 20 & 20 & 20 & 20   \\* \midrule
\multirow{3}{*}{\texttt{madev-median}} & $\ell_1$ & 0.0011 & 0.0054 & 0.0172 & 0.0435 & 0.1446 &  52.9097 & 3600.0000 & 3600.0000 &   20 & 20 & 20 & 20 & 20 & 20 &  0 &  0   \\
                                       & $\ell_2$ & 0.0001 & 0.0002 & 0.0002 & 0.0003 & 0.0003 &   0.0008 &    0.0085 &    0.0143   & 20 & 20 & 20 & 20 & 20 & 20 & 20 & 20   \\
                                       & $\ell_3$ & 0.0001 & 0.0002 & 0.0002 & 0.0003 & 0.0003 &   0.0009 &    0.0112 &    0.0151   & 20 & 20 & 20 & 20 & 20 & 20 & 20 & 20   \\* \midrule
\multirow{3}{*}{\texttt{gini-difs}}    & $\ell_1$ & 0.0011 & 0.0054 & 0.0168 & 0.0428 & 0.1403 &  52.7795 & 3600.0000 & 3600.0000 &   20 & 20 & 20 & 20 & 20 & 20 &  0 &  0   \\
                                       & $\ell_2$ & 0.0004 & 0.0012 & 0.0028 & 0.0060 & 0.0121 &   0.0249 &    1.7334 &   16.9960   & 20 & 20 & 20 & 20 & 20 & 20 & 20 & 20   \\
                                       & $\ell_3$ & 0.0004 & 0.0015 & 0.0039 & 0.0086 & 0.0176 &   0.0404 &    2.1569 &   20.1885   & 20 & 20 & 20 & 20 & 20 & 20 & 20 & 20   \\* \bottomrule
\end{tabular}
\caption{Results obtained evaluating $\tL$ measures.}
\label{tab:results-Lmeasures}
\end{table}

Table \ref{tab:results-nestedLmeasures} presents the results for several nested $\tL$-measures, including the \texttt{variance} (mean squared deviations from the mean), mean squared deviations from the median, maximum, and minimum (\texttt{msdev-median}, \texttt{msdev-max}, and \texttt{msdev-min}), as well as their corresponding mean absolute deviations (\texttt{madev-median}, \texttt{madev-max}, and \texttt{madev-min}). These measures, previously introduced in Table \ref{table:dispersion_lambdas}, provide insight into the dispersion characteristics of the data. Additionally, the table includes the well-known \texttt{skewness} and \texttt{kurtosis} coefficients, where the mean serves as the nested location measure.
The results compare two models: $\bfrak_1$, which is derived from formulation \eqref{form:N_1}, and $\bfrak_2$, obtained from formulation \eqref{form:N_2}. The computational time and accuracy for each measure are reported across varying sample sizes.
Both models, $\bfrak_1$ and $\bfrak_2$, exhibit similar performance increasing computational time as the sample size grows, except for large instances sizes where $\bfrak_1$ is outperformed by $\bfrak_2$ in terms of time, especially for the \texttt{madev-max} and \texttt{skewness} criteria. 
Also, both $\bfrak_1$ and $\bfrak_2$ demonstrate stable accuracy across all nested $\tL$-measures, maintaining a consistent score of 20 in most cases. This suggests that both models provide reliable estimates across different dispersion and shape measures, except for the \texttt{msdev-min} measure where $\bfrak_1$ is clearly outperformed by $\bfrak_2$ for large instances.

\begin{table}[h!]
\centering
\tiny
\renewcommand{\arraystretch}{0.75}
\begin{tabular}{@{}cc|rrrrrrrr|rrrrrrrr@{}}
\toprule
\multicolumn{1}{c}{\multirow{2}{*}{Measure}} &
  \multicolumn{1}{c}{\multirow{2}{*}{Model}} &
  \multicolumn{8}{c|}{Time} &
  \multicolumn{8}{c}{Accuracy} \\ \cmidrule(l){3-18} 
\multicolumn{1}{c}{} &
  \multicolumn{1}{c}{} &
  10 & 20 & 30 & 40 & 50 & 100 & 500 & 1000  & 10 & 20 & 30 & 40 & 50 & 100 & 500 & 1000  \\ \midrule
\multirow{2}{*}{\texttt{variance}}     & $\bfrak_1$ & 0.0003 & 0.0005 & 0.0009 & 0.0014 & 0.0021 & 0.0098 &  0.3784 &  1.8353   & 20 & 20 & 20 & 20 & 20 &20 & 20 & 20   \\
                                       & $\bfrak_2$ & 0.0002 & 0.0004 & 0.0009 & 0.0016 & 0.0024 & 0.0107 &  0.3871 &  2.1006   & 20 & 20 & 20 & 20 & 20 &20 & 20 & 20   \\* \midrule
\multirow{2}{*}{\texttt{msdev-median}} & $\bfrak_1$ & 0.0004 & 0.0008 & 0.0013 & 0.0023 & 0.0034 & 0.0131 &  0.3337 &  1.7240   & 20 & 20 & 20 & 20 & 20 &20 & 20 & 20   \\
                                       & $\bfrak_2$ & 0.0004 & 0.0009 & 0.0016 & 0.0029 & 0.0045 & 0.0166 &  0.3927 &  2.0073   & 20 & 20 & 20 & 20 & 20 &20 & 20 & 20   \\* \midrule
\multirow{2}{*}{\texttt{msdev-max}}    & $\bfrak_1$ & 0.0003 & 0.0006 & 0.0010 & 0.0017 & 0.0025 & 0.0107 &  0.3611 &  1.6875   & 20 & 20 & 20 & 20 & 20 &20 & 20 & 20   \\
                                       & $\bfrak_2$ & 0.0003 & 0.0007 & 0.0013 & 0.0023 & 0.0034 & 0.0132 &  0.4167 &  1.9957   & 20 & 20 & 20 & 20 & 20 &20 & 20 & 20   \\* \midrule
\multirow{2}{*}{\texttt{msdev-min}}    & $\bfrak_1$ & 0.0004 & 0.0007 & 0.0013 & 0.0023 & 0.0034 & 0.0136 &  0.3471 &  1.8349   & 20 & 20 & 20 & 20 & 20 &19 & 14 & 10   \\
                                       & $\bfrak_2$ & 0.0003 & 0.0007 & 0.0012 & 0.0021 & 0.0034 & 0.0130 &  0.4140 &  1.9671   & 20 & 20 & 20 & 20 & 20 &20 & 20 & 20   \\* \midrule
\multirow{2}{*}{\texttt{madev-mean}}   & $\bfrak_1$ & 0.0006 & 0.0014 & 0.0029 & 0.0054 & 0.0129 & 0.1243 &  0.5407 &  2.6555   & 20 & 20 & 20 & 20 & 20 &20 & 20 & 20   \\
                                       & $\bfrak_2$ & 0.0003 & 0.0006 & 0.0011 & 0.0020 & 0.0032 & 0.0152 &  0.5710 &  2.7334   & 20 & 20 & 20 & 20 & 20 &20 & 20 & 20   \\* \midrule
\multirow{2}{*}{\texttt{madev-median}} & $\bfrak_1$ & 0.0011 & 0.0019 & 0.0031 & 0.0047 & 0.0066 & 0.0270 &  0.6400 &  3.0635   & 20 & 20 & 20 & 20 & 20 &20 & 20 & 20   \\
                                       & $\bfrak_2$ & 0.0012 & 0.0021 & 0.0035 & 0.0054 & 0.0075 & 0.0279 &  0.8018 &  3.5806   & 20 & 20 & 20 & 20 & 20 &20 & 20 & 20   \\* \midrule
\multirow{2}{*}{\texttt{madev-max}}    & $\bfrak_1$ & 0.0008 & 0.0012 & 0.0016 & 0.0025 & 0.0036 & 0.0167 &  6.4872 & 51.1322   & 20 & 20 & 20 & 20 & 20 &20 & 20 & 20   \\
                                       & $\bfrak_2$ & 0.0008 & 0.0017 & 0.0031 & 0.0067 & 0.0104 & 0.0308 &  0.7752 &  3.9238   & 20 & 20 & 20 & 20 & 20 &20 & 20 & 20   \\* \midrule
\multirow{2}{*}{\texttt{madev-min}}    & $\bfrak_1$ & 0.0013 & 0.0034 & 0.0074 & 0.0093 & 0.0133 & 0.0439 &  0.7684 &  3.2349   & 20 & 20 & 20 & 20 & 20 &20 & 20 & 20   \\
                                       & $\bfrak_2$ & 0.0005 & 0.0009 & 0.0015 & 0.0028 & 0.0041 & 0.0180 &  0.6795 &  3.2782   & 20 & 20 & 20 & 20 & 20 &20 & 20 & 20   \\* \midrule
\multirow{2}{*}{\texttt{skewness}}     & $\bfrak_1$ & 0.0018 & 0.0030 & 0.0054 & 0.0099 & 0.0204 & 0.1950 &  0.9368 & 52.8096   & 20 & 20 & 20 & 20 & 20 &20 & 20 & 20   \\
                                       & $\bfrak_2$ & 0.0014 & 0.0019 & 0.0031 & 0.0050 & 0.0073 & 0.0286 & 10.4145 &  5.0563   & 20 & 20 & 20 & 20 & 20 &20 & 20 & 20  \\* \midrule
\multirow{2}{*}{\texttt{kurtosis}}     & $\bfrak_1$ & 0.0016 & 0.0029 & 0.0053 & 0.0095 & 0.0205 & 0.2081 &  0.9167 &  4.6692   & 20 & 20 & 20 & 20 & 20 &20 & 20 & 20   \\
                                       & $\bfrak_2$ & 0.0012 & 0.0019 & 0.0031 & 0.0051 & 0.0073 & 0.0302 &  1.0423 &  5.0956   & 20 & 20 & 20 & 20 & 20 &20 & 20 & 20   \\* \bottomrule
\end{tabular}
\caption{Results obtained evaluating nested measures.}
\label{tab:results-nestedLmeasures}
\end{table}

Finally, Table \ref{tab:results-Qmeasures} summarizes the results for several $\QO$-measures, including mean squared deviations from the mean, median, maximum, and minimum (\texttt{variance}, \texttt{msdev-median}, \texttt{msdev-max}, and \texttt{msdev-min}, respectively). Additionally, the table includes the $(n/10, n/10)$-trimmed variance (\texttt{trim-variance}) and the $(n/10, n/10)$-winsorized variance (\texttt{win-variance}), which were introduced in Section \ref{sec:4}. The only model tested for this set of measures is derived from formulation \eqref{form:Q_linearized}, denoted as $\QO$.
For the $\QO$-measures the computational time increases significantly as the sample size grows.  The model requires considerably more time as the sample size grows, indicating a higher computational cost compared to the $\tL$-measure and nested $\tL$-measure models. 
In contrast, the accuracy of the model also remains stable at 20 across all tested measures, indicating that despite higher computational costs, the model maintains strong estimation consistency.

\begin{table}[h!]
\centering
\footnotesize
\renewcommand{\arraystretch}{0.75}
\begin{tabular}{@{}cc|rrrrr|rrrrr@{}}
\toprule
\multicolumn{1}{c}{\multirow{2}{*}{Measure}} &
  \multicolumn{1}{c}{\multirow{2}{*}{Model}} &
  \multicolumn{5}{c|}{Time} &
  \multicolumn{5}{c}{Accuracy} \\ \cmidrule(l){3-12} 
\multicolumn{1}{c}{} &
  \multicolumn{1}{c}{} &
  10 & 20 & 30 & 40 & 50 & 10 & 20 & 30 & 40 & 50 \\ \midrule
{\texttt{variance}}      & $\QO$ &  9.1381 & 13.6615 & 128.1267 & 902.3868 & 3292.905 & 20 & 20 & 20 & 20 & 20 \\* \midrule
{\texttt{msdev-median}}  & $\QO$ &  2.8683 & 13.9078 & 125.0910 & 793.5474 & 2187.096 & 20 & 20 & 20 & 20 & 20 \\* \midrule
{\texttt{msdev-max}}     & $\QO$ &  2.1225 & 12.4740 & 118.8592 & 746.7027 & 2225.038 & 20 & 20 & 20 & 20 & 20 \\* \midrule
{\texttt{msdev-min}}     & $\QO$ &  1.7471 & 12.2372 & 118.0600 & 750.9281 & 2199.251 & 20 & 20 & 20 & 20 & 20 \\* \midrule
{\texttt{trim-variance}} & $\QO$ & 11.0785 & 13.0243 & 119.2869 & 844.5739 & 2952.002 & 20 & 20 & 20 & 20 & 20 \\* \midrule
{\texttt{win-variance}}  & $\QO$ & 10.5823 & 12.8667 & 120.0339 & 855.3233 & 3224.855 & 20 & 20 & 20 & 20 & 20 \\* \bottomrule
\end{tabular}
\caption{Results obtained evaluating $\QO$ measures.}
\label{tab:results-Qmeasures}
\end{table}

\hfill 

\begin{rem}
    Note that, although it is not our goal,  methodology allows evaluating the measures using the \textit{a priori} information about the sorted values of $\x$ by fixing the values of the variables in the optimization models. A thorough analysis of the different systems of constraints results in a worst-case complexity of $O(n^2)$ for the $\tL$-measures and the nested measures, and $O(n^4)$ for the quadratic $\QO$ measures.
\end{rem}

\section{Application to optimization problems}
\label{sec:6}

As already mentioned, the main advantage of the optimization-based methodologies that we propose to compute the metrics analyzed in this paper is that one can embed them into decision problems that requires optimizing considering that these measures depend on variable values that are part of the decision making process. 
In this section we focus on applying these general measures to values $\x$ that are  decision variables of feasible sets of different problems. To illustrate our claim, we show the use of these measures on three challenging  optimization problems  where this aggregation is particularly useful: scenario analysis in linear programming, traveling salesmam problem and weighted multicover set problem.

\subsection{Scenario Analysis in Linear Programming}\label{sec:polyhedron}

Given a set of $n$ cost scenarios $c = (c_1, \ldots, c_n) \in \R^{d\times n}$ in a standard linear programming problem, the goal of scenario analysis is to derive solutions of the linear programming problem under different metrics to highlight qualitative properties of these solutions. Then, we are given a polyhedron $P = \{z \in \R_+^d: Az \leq b\}$, and $\x = C z \in \R^n$ for $z\in P$.

We wish to solve the problem $\ell(\x,\blambda)$, $\mathfrak{b}(\x,\blambda)$, and $\QO(\x,M)$ for different values of the $\blambda$ and $M$.

For illustrative purposes, we solve various problems for a planar ($d=2$) polyhedron, $P$, with vertices  $V =\{(2, 3), (5, 1), (7, 4),  (4, 8),  (1, 6)\}$, which is drawn in Figure \ref{fig:polyhedron} and cost matrix:
$$
C = \begin{pmatrix}
      3 &  4\\
      2 &  0\\
-3 &  -2\\
-2 & -6\\
4 &-10
\end{pmatrix}
$$

We run our models for the $\ell$, $\mathfrak{b}$, and $\QO$ models for different $\blambda$-vectors and matrices, to detect an optimal point in the polyhedron minimizing the measure aggregating the cost scenarios that we analyze in this paper. We use the $\blambda$ vectors identified with the mean, the maximum, and their nested square deviation versions with respect to those ordered measures. 

The solutions are plotted in Figure \ref{fig:polyhedron} (right), where we observe that the solutions obtained with the different summarizing criteria differ from each other.

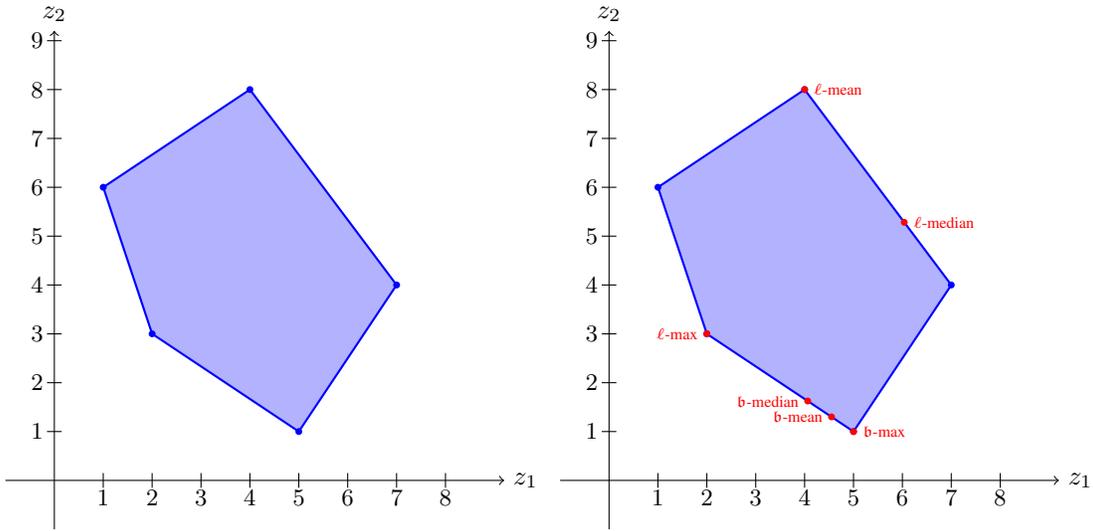
\begin{figure}[h]
\begin{center}
\begin{tikzpicture}[scale=0.65]
    \draw[thin,->] (-1,0) -- (9.2,0) node[right] {$z_1$};
    \draw[thin,->] (0,-1) -- (0,9.2) node[above] {$z_2$};

        \coordinate (A) at (2,3);
        \coordinate (B) at (5,1);
        \coordinate (C) at (7,4);
        \coordinate (D) at (4,8);
        \coordinate (E) at (1,6);

        \fill[blue!30, opacity=0.5] (A) -- (B) -- (C) -- (D) -- (E) -- cycle;

        \draw[thick, blue] (A) -- (B) -- (C) -- (D) -- (E) -- cycle;

        \foreach \point in {A, B, C, D, E}
            \fill[blue] (\point) circle (2pt);

    \node at (-0.35,1) {\small $1$};
    \node at (-0.35,2) {\small $2$};
    \node at (-0.35,3) {\small $3$};
    \node at (-0.35,4) {\small $4$};
    \node at (-0.35,5) {\small $5$};
    \node at (-0.35,6) {\small $6$};
    \node at (-0.35,7) {\small $7$};
    \node at (-0.35,8) {\small $8$};
    \node at (-0.35,9) {\small $9$};
    \draw[thin] (0.15,1)--(-0.15,1);
    \draw[thin] (0.15,2)--(-0.15,2);
    \draw[thin] (0.15,3)--(-0.15,3);
    \draw[thin] (0.15,4)--(-0.15,4);
    \draw[thin] (0.15,5)--(-0.15,5);
    \draw[thin] (0.15,6)--(-0.15,6);
    \draw[thin] (0.15,7)--(-0.15,7);
    \draw[thin] (0.15,8)--(-0.15,8);
    \draw[thin] (0.15,9)--(-0.15,9);

    \draw[thin] (1,0.15)--(1,-0.15);
    \draw[thin] (2,0.15)--(2,-0.15);
    \draw[thin] (3,0.15)--(3,-0.15);
    \draw[thin] (4,0.15)--(4,-0.15);
    \draw[thin] (5,0.15)--(5,-0.15);
    \draw[thin] (6,0.15)--(6,-0.15);
    \draw[thin] (7,0.15)--(7,-0.15);
    \draw[thin] (8,0.15)--(8,-0.15);

    \node at (1,-0.35) {\small $1$};
    \node at (2,-0.35) {\small $2$};
    \node at (3,-0.35) {\small $3$};
    \node at (4,-0.35) {\small $4$};
    \node at (5,-0.35) {\small $5$};
    \node at (6,-0.35) {\small $6$};
    \node at (7,-0.35) {\small $7$};
    \node at (8,-0.35) {\small $8$};


\end{tikzpicture}~\begin{tikzpicture}[scale=0.65]
    \draw[thin,->] (-1,0) -- (9.2,0) node[right] {$z_1$};
    \draw[thin,->] (0,-1) -- (0,9.2) node[above] {$z_2$};

    \coordinate (A) at (2,3);
        \coordinate (B) at (5,1);
        \coordinate (C) at (7,4);
        \coordinate (D) at (4,8);
        \coordinate (E) at (1,6);

        \fill[blue!30, opacity=0.5] (A) -- (B) -- (C) -- (D) -- (E) -- cycle;

        \draw[thick, blue] (A) -- (B) -- (C) -- (D) -- (E) -- cycle;

        \foreach \point in {A, B, C, D, E}
            \fill[blue] (\point) circle (2pt);

    \node at (-0.35,1) {\small $1$};
    \node at (-0.35,2) {\small $2$};
    \node at (-0.35,3) {\small $3$};
    \node at (-0.35,4) {\small $4$};
    \node at (-0.35,5) {\small $5$};
    \node at (-0.35,6) {\small $6$};
    \node at (-0.35,7) {\small $7$};
    \node at (-0.35,8) {\small $8$};
    \node at (-0.35,9) {\small $9$};
    \draw[thin] (0.15,1)--(-0.15,1);
    \draw[thin] (0.15,2)--(-0.15,2);
    \draw[thin] (0.15,3)--(-0.15,3);
    \draw[thin] (0.15,4)--(-0.15,4);
    \draw[thin] (0.15,5)--(-0.15,5);
    \draw[thin] (0.15,6)--(-0.15,6);
    \draw[thin] (0.15,7)--(-0.15,7);
    \draw[thin] (0.15,8)--(-0.15,8);
    \draw[thin] (0.15,9)--(-0.15,9);

    \draw[thin] (1,0.15)--(1,-0.15);
    \draw[thin] (2,0.15)--(2,-0.15);
    \draw[thin] (3,0.15)--(3,-0.15);
    \draw[thin] (4,0.15)--(4,-0.15);
    \draw[thin] (5,0.15)--(5,-0.15);
    \draw[thin] (6,0.15)--(6,-0.15);
    \draw[thin] (7,0.15)--(7,-0.15);
    \draw[thin] (8,0.15)--(8,-0.15);

    \node at (1,-0.35) {\small $1$};
    \node at (2,-0.35) {\small $2$};
    \node at (3,-0.35) {\small $3$};
    \node at (4,-0.35) {\small $4$};
    \node at (5,-0.35) {\small $5$};
    \node at (6,-0.35) {\small $6$};
    \node at (7,-0.35) {\small $7$};
    \node at (8,-0.35) {\small $8$};


    \fill[red] (4., 8.) circle (2pt) node[right] {\tiny $\ell$-mean};
    \fill[red] (2,3) circle (2pt) node[left] {\tiny $\ell$-max};
    \fill[red] (6.03773585, 5.28301887) circle (2pt) node[right] {\tiny $\ell$-median};
    \fill[red] (4.55, 1.3) circle (2pt) node[left] {\tiny $\mathfrak{b}$-mean};
    \fill[red] (5,1) circle (2pt) node[right] {\tiny $\mathfrak{b}$-max};
    \fill[red] (4.0625, 1.625) circle (2pt) node[left] {\tiny $\mathfrak{b}$-median};

\end{tikzpicture}
\end{center}
    \caption{Polyhedron considered in the Example of Section \ref{sec:polyhedron} (left) and solution obtained (right).\label{fig:polyhedron}}
\end{figure}

Note that, although evaluating under these measures for a given set of values $\x$ can be done using sorting approaches, these tools do not allow to determine the solutions of this type of problem that requires the evaluation of infinitely many continuous values.

\subsection{Traveling Salesman Problem}

The traveling salesman problem (TSP) is a well-known combinatorial optimization problem with multiple applications in different field. Given a directed graph $G=(V,A)$, with node set $V$ and arc set $A$, and for each arc $a\in A$, a weight $w_a\geq 0$, the goal is to construct a Hamiltonian cycle visiting the nodes of $G$ exactly once at minimum overall sum of the weights.

Different formulations have been proposed to solve this problem using mathematical optimization models see, e.g., among many others \cite{dfj,gavish1978travelling,miller1960integer}. Most of these formulations use the following family of binary variables to identify the arcs that are part of the solution:

$$
z_a = \begin{cases}
    1 & \text{if $a$ is  in the cycle,}\\
    0 & \text{otherwise}
\end{cases}, \forall a \in A.
$$

Note that since each node appears exactly once as an outgoing node in the cycle, the cost of the outgoing arc from $v\in V$ can be expressed as:

$$
x_v = \sum_{a\in \delta^+(v)} w_a z_a.
$$

Thus, the overall sum of the weights can be written as $\sum_{v\in V} x_v$. Instead of minimizing the overall sum of the weights of the arcs involved in the solution, one can also minimize any of the measures proposed in the previous sections on the set of costs $\{x_v\}_{v\in V}$ described above.

To illustrate the solutions obtained with our methodologies through a simple example, we generate the complete graph drawn in Figure \ref{fig:tsp0}, where the arc costs correspond to the Euclidean norms between the points in the plane representing the nodes.

\begin{figure}[h]
\begin{center}
\begin{tikzpicture}[scale=0.07]
     \draw
        (99.964, 29.443) node[draw, fill=gray!20, minimum size=1pt] (0){1}
        (8.374, 34.1) node[draw, fill=gray!20, minimum size=1pt] (1){2}
        (89.791, 18.66) node[draw, fill=gray!20, minimum size=1pt] (2){3}
        (60.064, 34.381) node[draw, fill=gray!20, minimum size=1pt] (3){4}
        (22.225, 50.676) node[draw, fill=gray!20, minimum size=1pt] (4){5}
        (17.856, 88.361) node[draw, fill=gray!20, minimum size=1pt] (5){6}
        (96.902, 50.93) node[draw, fill=gray!20, minimum size=1pt] (6){7}
        (55.599, 13.246) node[draw, fill=gray!20, minimum size=1pt] (7){8};
      \begin{scope}[->]
        \draw[color=gray, line width=0.5pt] (0) to (1);
        \draw[color=gray, line width=0.5pt] (0) to (2);
        \draw[color=gray, line width=0.5pt] (0) to (3);
        \draw[color=gray, line width=0.5pt] (0) to (4);
        \draw[color=gray, line width=0.5pt] (0) to (5);
        \draw[color=gray, line width=0.5pt] (0) to (6);
        \draw[color=gray, line width=0.5pt] (0) to (7);
        \draw[color=gray, line width=0.5pt] (1) to (0);
        \draw[color=gray, line width=0.5pt] (1) to (2);
        \draw[color=gray, line width=0.5pt] (1) to (3);
        \draw[color=gray, line width=0.5pt] (1) to (4);
        \draw[color=gray, line width=0.5pt] (1) to (5);
        \draw[color=gray, line width=0.5pt] (1) to (6);
        \draw[color=gray, line width=0.5pt] (1) to (7);
        \draw[color=gray, line width=0.5pt] (2) to (0);
        \draw[color=gray, line width=0.5pt] (2) to (1);
        \draw[color=gray, line width=0.5pt] (2) to (3);
        \draw[color=gray, line width=0.5pt] (2) to (4);
        \draw[color=gray, line width=0.5pt] (2) to (5);
        \draw[color=gray, line width=0.5pt] (2) to (6);
        \draw[color=gray, line width=0.5pt] (2) to (7);
        \draw[color=gray, line width=0.5pt] (3) to (0);
        \draw[color=gray, line width=0.5pt] (3) to (1);
        \draw[color=gray, line width=0.5pt] (3) to (2);
        \draw[color=gray, line width=0.5pt] (3) to (4);
        \draw[color=gray, line width=0.5pt] (3) to (5);
        \draw[color=gray, line width=0.5pt] (3) to (6);
        \draw[color=gray, line width=0.5pt] (3) to (7);
        \draw[color=gray, line width=0.5pt] (4) to (0);
        \draw[color=gray, line width=0.5pt] (4) to (1);
        \draw[color=gray, line width=0.5pt] (4) to (2);
        \draw[color=gray, line width=0.5pt] (4) to (3);
        \draw[color=gray, line width=0.5pt] (4) to (5);
        \draw[color=gray, line width=0.5pt] (4) to (6);
        \draw[color=gray, line width=0.5pt] (4) to (7);
        \draw[color=gray, line width=0.5pt] (5) to (0);
        \draw[color=gray, line width=0.5pt] (5) to (1);
        \draw[color=gray, line width=0.5pt] (5) to (2);
        \draw[color=gray, line width=0.5pt] (5) to (3);
        \draw[color=gray, line width=0.5pt] (5) to (4);
        \draw[color=gray, line width=0.5pt] (5) to (6);
        \draw[color=gray, line width=0.5pt] (5) to (7);
        \draw[color=gray, line width=0.5pt] (6) to (0);
        \draw[color=gray, line width=0.5pt] (6) to (1);
        \draw[color=gray, line width=0.5pt] (6) to (2);
        \draw[color=gray, line width=0.5pt] (6) to (3);
        \draw[color=gray, line width=0.5pt] (6) to (4);
        \draw[color=gray, line width=0.5pt] (6) to (5);
        \draw[color=gray, line width=0.5pt] (6) to (7);
        \draw[color=gray, line width=0.5pt] (7) to (0);
        \draw[color=gray, line width=0.5pt] (7) to (1);
        \draw[color=gray, line width=0.5pt] (7) to (2);
        \draw[color=gray, line width=0.5pt] (7) to (3);
        \draw[color=gray, line width=0.5pt] (7) to (4);
        \draw[color=gray, line width=0.5pt] (7) to (5);
        \draw[color=gray, line width=0.5pt] (7) to (6);
      \end{scope}
    \end{tikzpicture}
    \caption{Graph used to illustrate the TSP problem under our measures.\label{fig:tsp0}}
    \end{center}
    \end{figure}
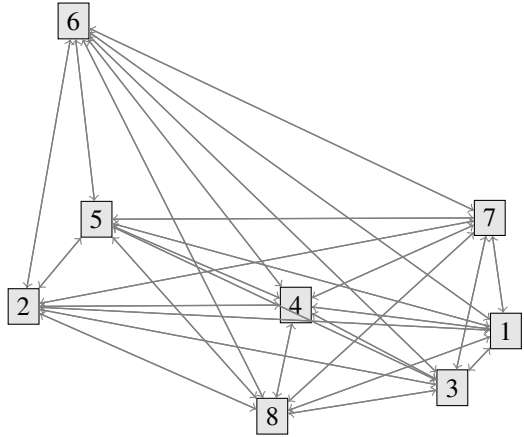

We run our models for the $\ell$, $\mathfrak{b}$, and $\QO$ models for different $\blambda$-vectors and matrices, namely, those $\blambda$ corresponding to the mean, the 25\%-25\%-trimmed mean, and the maximum, their absolute deviations versions for the nested case, and the squared deviations with the $\QO$ approach. In Figures \ref{fig:tsp_solsL}-\ref{fig:tsp_solsQ} we plot the solutions obtained with the different methodologies. In Figure \ref{fig:tsp_solsL} we show the resulting cycles for the $\ell$-measures, in figure \ref{fig:tsp_solsN} the ones obtained for the nested measures, and in Figure \ref{fig:tsp_solsQ} the cycles constructed for the $\QO$ measures based on the squared deviation with respect different $\ell$-measures.

\begin{figure}[h]
\begin{center}
\fbox{\adjustbox{width=0.33\textwidth}{{\centering $\ell$-\texttt{mean}}
  \begin{tikzpicture}[scale=0.1]
      \draw
        (99.964, 29.443) node[draw, fill=gray!20, minimum size=1pt] (0){1}
        (8.374, 34.1) node[draw, fill=gray!20, minimum size=1pt] (1){2}
        (89.791, 18.66) node[draw, fill=gray!20, minimum size=1pt] (2){3}
        (60.064, 34.381) node[draw, fill=gray!20, minimum size=1pt] (3){4}
        (22.225, 50.676) node[draw, fill=gray!20, minimum size=1pt] (4){5}
        (17.856, 88.361) node[draw, fill=gray!20, minimum size=1pt] (5){6}
        (96.902, 50.93) node[draw, fill=gray!20, minimum size=1pt] (6){7}
        (55.599, 13.246) node[draw, fill=gray!20, minimum size=1pt] (7){8};
      \begin{scope}[->]
        \draw[color=gray, line width=1pt] (0) to (1);
        \draw[color=black, line width=3pt] (0) to (2);
        \draw[color=gray, line width=1pt] (0) to (3);
        \draw[color=gray, line width=1pt] (0) to (4);
        \draw[color=gray, line width=1pt] (0) to (5);
        \draw[color=gray, line width=1pt] (0) to (6);
        \draw[color=gray, line width=1pt] (0) to (7);
        \draw[color=gray, line width=1pt] (1) to (0);
        \draw[color=gray, line width=1pt] (1) to (2);
        \draw[color=gray, line width=1pt] (1) to (3);
        \draw[color=black, line width=3pt] (1) to (4);
        \draw[color=gray, line width=1pt] (1) to (5);
        \draw[color=gray, line width=1pt] (1) to (6);
        \draw[color=gray, line width=1pt] (1) to (7);
        \draw[color=gray, line width=1pt] (2) to (0);
        \draw[color=gray, line width=1pt] (2) to (1);
        \draw[color=black, line width=3pt] (2) to (3);
        \draw[color=gray, line width=1pt] (2) to (4);
        \draw[color=gray, line width=1pt] (2) to (5);
        \draw[color=gray, line width=1pt] (2) to (6);
        \draw[color=gray, line width=1pt] (2) to (7);
        \draw[color=gray, line width=1pt] (3) to (0);
        \draw[color=gray, line width=1pt] (3) to (1);
        \draw[color=gray, line width=1pt] (3) to (2);
        \draw[color=gray, line width=1pt] (3) to (4);
        \draw[color=gray, line width=1pt] (3) to (5);
        \draw[color=gray, line width=1pt] (3) to (6);
        \draw[color=black, line width=3pt] (3) to (7);
        \draw[color=gray, line width=1pt] (4) to (0);
        \draw[color=gray, line width=1pt] (4) to (1);
        \draw[color=gray, line width=1pt] (4) to (2);
        \draw[color=gray, line width=1pt] (4) to (3);
        \draw[color=black, line width=3pt] (4) to (5);
        \draw[color=gray, line width=1pt] (4) to (6);
        \draw[color=gray, line width=1pt] (4) to (7);
        \draw[color=gray, line width=1pt] (5) to (0);
        \draw[color=gray, line width=1pt] (5) to (1);
        \draw[color=gray, line width=1pt] (5) to (2);
        \draw[color=gray, line width=1pt] (5) to (3);
        \draw[color=gray, line width=1pt] (5) to (4);
        \draw[color=black, line width=3pt] (5) to (6);
        \draw[color=gray, line width=1pt] (5) to (7);
        \draw[color=black, line width=3pt] (6) to (0);
        \draw[color=gray, line width=1pt] (6) to (1);
        \draw[color=gray, line width=1pt] (6) to (2);
        \draw[color=gray, line width=1pt] (6) to (3);
        \draw[color=gray, line width=1pt] (6) to (4);
        \draw[color=gray, line width=1pt] (6) to (5);
        \draw[color=gray, line width=1pt] (6) to (7);
        \draw[color=gray, line width=1pt] (7) to (0);
        \draw[color=black, line width=3pt] (7) to (1);
        \draw[color=gray, line width=1pt] (7) to (2);
        \draw[color=gray, line width=1pt] (7) to (3);
        \draw[color=gray, line width=1pt] (7) to (4);
        \draw[color=gray, line width=1pt] (7) to (5);
        \draw[color=gray, line width=1pt] (7) to (6);
      \end{scope}
    \end{tikzpicture}}}~\fbox{\adjustbox{width=0.33\textwidth}{{\centering $\ell$-\texttt{trim}}
  \begin{tikzpicture}[scale=0.1]
      \draw
        (99.964, 29.443) node[draw, fill=gray!20, minimum size=1pt] (0){1}
        (8.374, 34.1) node[draw, fill=gray!20, minimum size=1pt] (1){2}
        (89.791, 18.66) node[draw, fill=gray!20, minimum size=1pt] (2){3}
        (60.064, 34.381) node[draw, fill=gray!20, minimum size=1pt] (3){4}
        (22.225, 50.676) node[draw, fill=gray!20, minimum size=1pt] (4){5}
        (17.856, 88.361) node[draw, fill=gray!20, minimum size=1pt] (5){6}
        (96.902, 50.93) node[draw, fill=gray!20, minimum size=1pt] (6){7}
        (55.599, 13.246) node[draw, fill=gray!20, minimum size=1pt] (7){8};
      \begin{scope}[->]
        \draw[color=gray, line width=1pt] (0) to (1);
        \draw[color=gray, line width=1pt] (0) to (2);
        \draw[color=gray, line width=1pt] (0) to (3);
        \draw[color=gray, line width=1pt] (0) to (4);
        \draw[color=gray, line width=1pt] (0) to (5);
        \draw[color=black, line width=3pt] (0) to (6);
        \draw[color=gray, line width=1pt] (0) to (7);
        \draw[color=gray, line width=1pt] (1) to (0);
        \draw[color=gray, line width=1pt] (1) to (2);
        \draw[color=gray, line width=1pt] (1) to (3);
        \draw[color=gray, line width=1pt] (1) to (4);
        \draw[color=gray, line width=1pt] (1) to (5);
        \draw[color=gray, line width=1pt] (1) to (6);
        \draw[color=black, line width=3pt] (1) to (7);
        \draw[color=black, line width=3pt] (2) to (0);
        \draw[color=gray, line width=1pt] (2) to (1);
        \draw[color=gray, line width=1pt] (2) to (3);
        \draw[color=gray, line width=1pt] (2) to (4);
        \draw[color=gray, line width=1pt] (2) to (5);
        \draw[color=gray, line width=1pt] (2) to (6);
        \draw[color=gray, line width=1pt] (2) to (7);
        \draw[color=gray, line width=1pt] (3) to (0);
        \draw[color=gray, line width=1pt] (3) to (1);
        \draw[color=black, line width=3pt] (3) to (2);
        \draw[color=gray, line width=1pt] (3) to (4);
        \draw[color=gray, line width=1pt] (3) to (5);
        \draw[color=gray, line width=1pt] (3) to (6);
        \draw[color=gray, line width=1pt] (3) to (7);
        \draw[color=gray, line width=1pt] (4) to (0);
        \draw[color=black, line width=3pt] (4) to (1);
        \draw[color=gray, line width=1pt] (4) to (2);
        \draw[color=gray, line width=1pt] (4) to (3);
        \draw[color=gray, line width=1pt] (4) to (5);
        \draw[color=gray, line width=1pt] (4) to (6);
        \draw[color=gray, line width=1pt] (4) to (7);
        \draw[color=gray, line width=1pt] (5) to (0);
        \draw[color=gray, line width=1pt] (5) to (1);
        \draw[color=gray, line width=1pt] (5) to (2);
        \draw[color=gray, line width=1pt] (5) to (3);
        \draw[color=black, line width=3pt] (5) to (4);
        \draw[color=gray, line width=1pt] (5) to (6);
        \draw[color=gray, line width=1pt] (5) to (7);
        \draw[color=gray, line width=1pt] (6) to (0);
        \draw[color=gray, line width=1pt] (6) to (1);
        \draw[color=gray, line width=1pt] (6) to (2);
        \draw[color=gray, line width=1pt] (6) to (3);
        \draw[color=gray, line width=1pt] (6) to (4);
        \draw[color=black, line width=3pt] (6) to (5);
        \draw[color=gray, line width=1pt] (6) to (7);
        \draw[color=gray, line width=1pt] (7) to (0);
        \draw[color=gray, line width=1pt] (7) to (1);
        \draw[color=gray, line width=1pt] (7) to (2);
        \draw[color=black, line width=3pt] (7) to (3);
        \draw[color=gray, line width=1pt] (7) to (4);
        \draw[color=gray, line width=1pt] (7) to (5);
        \draw[color=gray, line width=1pt] (7) to (6);
      \end{scope}
    \end{tikzpicture}}}~\fbox{\adjustbox{width=0.33\textwidth}{{\centering $\ell$-\texttt{max}}
  \begin{tikzpicture}[scale=0.1]
      \draw
        (99.964, 29.443) node[draw, fill=gray!20, minimum size=1pt] (0){1}
        (8.374, 34.1) node[draw, fill=gray!20, minimum size=1pt] (1){2}
        (89.791, 18.66) node[draw, fill=gray!20, minimum size=1pt] (2){3}
        (60.064, 34.381) node[draw, fill=gray!20, minimum size=1pt] (3){4}
        (22.225, 50.676) node[draw, fill=gray!20, minimum size=1pt] (4){5}
        (17.856, 88.361) node[draw, fill=gray!20, minimum size=1pt] (5){6}
        (96.902, 50.93) node[draw, fill=gray!20, minimum size=1pt] (6){7}
        (55.599, 13.246) node[draw, fill=gray!20, minimum size=1pt] (7){8};
      \begin{scope}[->]
        \draw[color=gray, line width=1pt] (0) to (1);
        \draw[color=gray, line width=1pt] (0) to (2);
        \draw[color=gray, line width=1pt] (0) to (3);
        \draw[color=gray, line width=1pt] (0) to (4);
        \draw[color=gray, line width=1pt] (0) to (5);
        \draw[color=black, line width=3pt] (0) to (6);
        \draw[color=gray, line width=1pt] (0) to (7);
        \draw[color=gray, line width=1pt] (1) to (0);
        \draw[color=gray, line width=1pt] (1) to (2);
        \draw[color=gray, line width=1pt] (1) to (3);
        \draw[color=gray, line width=1pt] (1) to (4);
        \draw[color=black, line width=3pt] (1) to (5);
        \draw[color=gray, line width=1pt] (1) to (6);
        \draw[color=gray, line width=1pt] (1) to (7);
        \draw[color=gray, line width=1pt] (2) to (0);
        \draw[color=gray, line width=1pt] (2) to (1);
        \draw[color=gray, line width=1pt] (2) to (3);
        \draw[color=gray, line width=1pt] (2) to (4);
        \draw[color=gray, line width=1pt] (2) to (5);
        \draw[color=gray, line width=1pt] (2) to (6);
        \draw[color=black, line width=3pt] (2) to (7);
        \draw[color=black, line width=3pt] (3) to (0);
        \draw[color=gray, line width=1pt] (3) to (1);
        \draw[color=gray, line width=1pt] (3) to (2);
        \draw[color=gray, line width=1pt] (3) to (4);
        \draw[color=gray, line width=1pt] (3) to (5);
        \draw[color=gray, line width=1pt] (3) to (6);
        \draw[color=gray, line width=1pt] (3) to (7);
        \draw[color=gray, line width=1pt] (4) to (0);
        \draw[color=gray, line width=1pt] (4) to (1);
        \draw[color=gray, line width=1pt] (4) to (2);
        \draw[color=black, line width=3pt] (4) to (3);
        \draw[color=gray, line width=1pt] (4) to (5);
        \draw[color=gray, line width=1pt] (4) to (6);
        \draw[color=gray, line width=1pt] (4) to (7);
        \draw[color=gray, line width=1pt] (5) to (0);
        \draw[color=gray, line width=1pt] (5) to (1);
        \draw[color=gray, line width=1pt] (5) to (2);
        \draw[color=gray, line width=1pt] (5) to (3);
        \draw[color=black, line width=3pt] (5) to (4);
        \draw[color=gray, line width=1pt] (5) to (6);
        \draw[color=gray, line width=1pt] (5) to (7);
        \draw[color=gray, line width=1pt] (6) to (0);
        \draw[color=gray, line width=1pt] (6) to (1);
        \draw[color=black, line width=3pt] (6) to (2);
        \draw[color=gray, line width=1pt] (6) to (3);
        \draw[color=gray, line width=1pt] (6) to (4);
        \draw[color=gray, line width=1pt] (6) to (5);
        \draw[color=gray, line width=1pt] (6) to (7);
        \draw[color=gray, line width=1pt] (7) to (0);
        \draw[color=black, line width=3pt] (7) to (1);
        \draw[color=gray, line width=1pt] (7) to (2);
        \draw[color=gray, line width=1pt] (7) to (3);
        \draw[color=gray, line width=1pt] (7) to (4);
        \draw[color=gray, line width=1pt] (7) to (5);
        \draw[color=gray, line width=1pt] (7) to (6);
      \end{scope}
    \end{tikzpicture}}}
    \caption{Solutions for the TSP example that minimize  different $\ell$ ordered measures.\label{fig:tsp_solsL}}
\end{center}
\end{figure}
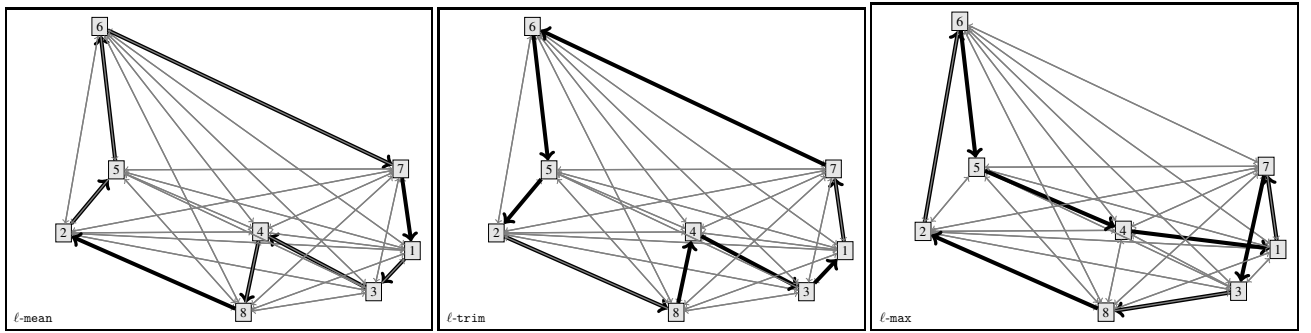

\begin{figure}[h]
\begin{center}
\fbox{\adjustbox{width=0.33\textwidth}{{\centering $\mathfrak{b}$-\texttt{mean}}
  \begin{tikzpicture}[scale=0.1]
      \draw
        (99.964, 29.443) node[draw, fill=gray!20, minimum size=1pt] (0){1}
        (8.374, 34.1) node[draw, fill=gray!20, minimum size=1pt] (1){2}
        (89.791, 18.66) node[draw, fill=gray!20, minimum size=1pt] (2){3}
        (60.064, 34.381) node[draw, fill=gray!20, minimum size=1pt] (3){4}
        (22.225, 50.676) node[draw, fill=gray!20, minimum size=1pt] (4){5}
        (17.856, 88.361) node[draw, fill=gray!20, minimum size=1pt] (5){6}
        (96.902, 50.93) node[draw, fill=gray!20, minimum size=1pt] (6){7}
        (55.599, 13.246) node[draw, fill=gray!20, minimum size=1pt] (7){8};
      \begin{scope}[->]
        \draw[color=gray, line width=1pt] (0) to (1);
        \draw[color=gray, line width=1pt] (0) to (2);
        \draw[color=black, line width=3pt] (0) to (3);
        \draw[color=gray, line width=1pt] (0) to (4);
        \draw[color=gray, line width=1pt] (0) to (5);
        \draw[color=gray, line width=1pt] (0) to (6);
        \draw[color=gray, line width=1pt] (0) to (7);
        \draw[color=gray, line width=1pt] (1) to (0);
        \draw[color=gray, line width=1pt] (1) to (2);
        \draw[color=gray, line width=1pt] (1) to (3);
        \draw[color=gray, line width=1pt] (1) to (4);
        \draw[color=gray, line width=1pt] (1) to (5);
        \draw[color=gray, line width=1pt] (1) to (6);
        \draw[color=black, line width=3pt] (1) to (7);
        \draw[color=gray, line width=1pt] (2) to (0);
        \draw[color=gray, line width=1pt] (2) to (1);
        \draw[color=gray, line width=1pt] (2) to (3);
        \draw[color=gray, line width=1pt] (2) to (4);
        \draw[color=gray, line width=1pt] (2) to (5);
        \draw[color=black, line width=3pt] (2) to (6);
        \draw[color=gray, line width=1pt] (2) to (7);
        \draw[color=gray, line width=1pt] (3) to (0);
        \draw[color=gray, line width=1pt] (3) to (1);
        \draw[color=gray, line width=1pt] (3) to (2);
        \draw[color=black, line width=3pt] (3) to (4);
        \draw[color=gray, line width=1pt] (3) to (5);
        \draw[color=gray, line width=1pt] (3) to (6);
        \draw[color=gray, line width=1pt] (3) to (7);
        \draw[color=gray, line width=1pt] (4) to (0);
        \draw[color=gray, line width=1pt] (4) to (1);
        \draw[color=gray, line width=1pt] (4) to (2);
        \draw[color=gray, line width=1pt] (4) to (3);
        \draw[color=black, line width=3pt] (4) to (5);
        \draw[color=gray, line width=1pt] (4) to (6);
        \draw[color=gray, line width=1pt] (4) to (7);
        \draw[color=gray, line width=1pt] (5) to (0);
        \draw[color=black, line width=3pt] (5) to (1);
        \draw[color=gray, line width=1pt] (5) to (2);
        \draw[color=gray, line width=1pt] (5) to (3);
        \draw[color=gray, line width=1pt] (5) to (4);
        \draw[color=gray, line width=1pt] (5) to (6);
        \draw[color=gray, line width=1pt] (5) to (7);
        \draw[color=black, line width=3pt] (6) to (0);
        \draw[color=gray, line width=1pt] (6) to (1);
        \draw[color=gray, line width=1pt] (6) to (2);
        \draw[color=gray, line width=1pt] (6) to (3);
        \draw[color=gray, line width=1pt] (6) to (4);
        \draw[color=gray, line width=1pt] (6) to (5);
        \draw[color=gray, line width=1pt] (6) to (7);
        \draw[color=gray, line width=1pt] (7) to (0);
        \draw[color=gray, line width=1pt] (7) to (1);
        \draw[color=black, line width=3pt] (7) to (2);
        \draw[color=gray, line width=1pt] (7) to (3);
        \draw[color=gray, line width=1pt] (7) to (4);
        \draw[color=gray, line width=1pt] (7) to (5);
        \draw[color=gray, line width=1pt] (7) to (6);
      \end{scope}
    \end{tikzpicture}}}~\fbox{\adjustbox{width=0.33\textwidth}{{\centering $\mathfrak{b}$-\texttt{trim}}
  \begin{tikzpicture}[scale=0.1]
      \draw
        (99.964, 29.443) node[draw, fill=gray!20, minimum size=1pt] (0){1}
        (8.374, 34.1) node[draw, fill=gray!20, minimum size=1pt] (1){2}
        (89.791, 18.66) node[draw, fill=gray!20, minimum size=1pt] (2){3}
        (60.064, 34.381) node[draw, fill=gray!20, minimum size=1pt] (3){4}
        (22.225, 50.676) node[draw, fill=gray!20, minimum size=1pt] (4){5}
        (17.856, 88.361) node[draw, fill=gray!20, minimum size=1pt] (5){6}
        (96.902, 50.93) node[draw, fill=gray!20, minimum size=1pt] (6){7}
        (55.599, 13.246) node[draw, fill=gray!20, minimum size=1pt] (7){8};
      \begin{scope}[->]
        \draw[color=gray, line width=1pt] (0) to (1);
        \draw[color=black, line width=3pt] (0) to (2);
        \draw[color=gray, line width=1pt] (0) to (3);
        \draw[color=gray, line width=1pt] (0) to (4);
        \draw[color=gray, line width=1pt] (0) to (5);
        \draw[color=gray, line width=1pt] (0) to (6);
        \draw[color=gray, line width=1pt] (0) to (7);
        \draw[color=gray, line width=1pt] (1) to (0);
        \draw[color=gray, line width=1pt] (1) to (2);
        \draw[color=gray, line width=1pt] (1) to (3);
        \draw[color=black, line width=3pt] (1) to (4);
        \draw[color=gray, line width=1pt] (1) to (5);
        \draw[color=gray, line width=1pt] (1) to (6);
        \draw[color=gray, line width=1pt] (1) to (7);
        \draw[color=gray, line width=1pt] (2) to (0);
        \draw[color=gray, line width=1pt] (2) to (1);
        \draw[color=gray, line width=1pt] (2) to (3);
        \draw[color=gray, line width=1pt] (2) to (4);
        \draw[color=gray, line width=1pt] (2) to (5);
        \draw[color=gray, line width=1pt] (2) to (6);
        \draw[color=black, line width=3pt] (2) to (7);
        \draw[color=gray, line width=1pt] (3) to (0);
        \draw[color=gray, line width=1pt] (3) to (1);
        \draw[color=gray, line width=1pt] (3) to (2);
        \draw[color=gray, line width=1pt] (3) to (4);
        \draw[color=gray, line width=1pt] (3) to (5);
        \draw[color=black, line width=3pt] (3) to (6);
        \draw[color=gray, line width=1pt] (3) to (7);
        \draw[color=gray, line width=1pt] (4) to (0);
        \draw[color=gray, line width=1pt] (4) to (1);
        \draw[color=gray, line width=1pt] (4) to (2);
        \draw[color=gray, line width=1pt] (4) to (3);
        \draw[color=black, line width=3pt] (4) to (5);
        \draw[color=gray, line width=1pt] (4) to (6);
        \draw[color=gray, line width=1pt] (4) to (7);
        \draw[color=gray, line width=1pt] (5) to (0);
        \draw[color=gray, line width=1pt] (5) to (1);
        \draw[color=gray, line width=1pt] (5) to (2);
        \draw[color=black, line width=3pt] (5) to (3);
        \draw[color=gray, line width=1pt] (5) to (4);
        \draw[color=gray, line width=1pt] (5) to (6);
        \draw[color=gray, line width=1pt] (5) to (7);
        \draw[color=black, line width=3pt] (6) to (0);
        \draw[color=gray, line width=1pt] (6) to (1);
        \draw[color=gray, line width=1pt] (6) to (2);
        \draw[color=gray, line width=1pt] (6) to (3);
        \draw[color=gray, line width=1pt] (6) to (4);
        \draw[color=gray, line width=1pt] (6) to (5);
        \draw[color=gray, line width=1pt] (6) to (7);
        \draw[color=gray, line width=1pt] (7) to (0);
        \draw[color=black, line width=3pt] (7) to (1);
        \draw[color=gray, line width=1pt] (7) to (2);
        \draw[color=gray, line width=1pt] (7) to (3);
        \draw[color=gray, line width=1pt] (7) to (4);
        \draw[color=gray, line width=1pt] (7) to (5);
        \draw[color=gray, line width=1pt] (7) to (6);
      \end{scope}
    \end{tikzpicture}}}~\fbox{\adjustbox{width=0.33\textwidth}{{\centering $\mathfrak{b}$-\texttt{max}}
  \begin{tikzpicture}[scale=0.1]
      \draw
        (99.964, 29.443) node[draw, fill=gray!20, minimum size=1pt] (0){1}
        (8.374, 34.1) node[draw, fill=gray!20, minimum size=1pt] (1){2}
        (89.791, 18.66) node[draw, fill=gray!20, minimum size=1pt] (2){3}
        (60.064, 34.381) node[draw, fill=gray!20, minimum size=1pt] (3){4}
        (22.225, 50.676) node[draw, fill=gray!20, minimum size=1pt] (4){5}
        (17.856, 88.361) node[draw, fill=gray!20, minimum size=1pt] (5){6}
        (96.902, 50.93) node[draw, fill=gray!20, minimum size=1pt] (6){7}
        (55.599, 13.246) node[draw, fill=gray!20, minimum size=1pt] (7){8};
      \begin{scope}[->]
        \draw[color=gray, line width=1pt] (0) to (1);
        \draw[color=gray, line width=1pt] (0) to (2);
        \draw[color=black, line width=3pt] (0) to (3);
        \draw[color=gray, line width=1pt] (0) to (4);
        \draw[color=gray, line width=1pt] (0) to (5);
        \draw[color=gray, line width=1pt] (0) to (6);
        \draw[color=gray, line width=1pt] (0) to (7);
        \draw[color=gray, line width=1pt] (1) to (0);
        \draw[color=gray, line width=1pt] (1) to (2);
        \draw[color=gray, line width=1pt] (1) to (3);
        \draw[color=gray, line width=1pt] (1) to (4);
        \draw[color=gray, line width=1pt] (1) to (5);
        \draw[color=gray, line width=1pt] (1) to (6);
        \draw[color=black, line width=3pt] (1) to (7);
        \draw[color=gray, line width=1pt] (2) to (0);
        \draw[color=gray, line width=1pt] (2) to (1);
        \draw[color=gray, line width=1pt] (2) to (3);
        \draw[color=gray, line width=1pt] (2) to (4);
        \draw[color=gray, line width=1pt] (2) to (5);
        \draw[color=black, line width=3pt] (2) to (6);
        \draw[color=gray, line width=1pt] (2) to (7);
        \draw[color=gray, line width=1pt] (3) to (0);
        \draw[color=gray, line width=1pt] (3) to (1);
        \draw[color=gray, line width=1pt] (3) to (2);
        \draw[color=black, line width=3pt] (3) to (4);
        \draw[color=gray, line width=1pt] (3) to (5);
        \draw[color=gray, line width=1pt] (3) to (6);
        \draw[color=gray, line width=1pt] (3) to (7);
        \draw[color=gray, line width=1pt] (4) to (0);
        \draw[color=gray, line width=1pt] (4) to (1);
        \draw[color=gray, line width=1pt] (4) to (2);
        \draw[color=gray, line width=1pt] (4) to (3);
        \draw[color=black, line width=3pt] (4) to (5);
        \draw[color=gray, line width=1pt] (4) to (6);
        \draw[color=gray, line width=1pt] (4) to (7);
        \draw[color=gray, line width=1pt] (5) to (0);
        \draw[color=black, line width=3pt] (5) to (1);
        \draw[color=gray, line width=1pt] (5) to (2);
        \draw[color=gray, line width=1pt] (5) to (3);
        \draw[color=gray, line width=1pt] (5) to (4);
        \draw[color=gray, line width=1pt] (5) to (6);
        \draw[color=gray, line width=1pt] (5) to (7);
        \draw[color=black, line width=3pt] (6) to (0);
        \draw[color=gray, line width=1pt] (6) to (1);
        \draw[color=gray, line width=1pt] (6) to (2);
        \draw[color=gray, line width=1pt] (6) to (3);
        \draw[color=gray, line width=1pt] (6) to (4);
        \draw[color=gray, line width=1pt] (6) to (5);
        \draw[color=gray, line width=1pt] (6) to (7);
        \draw[color=gray, line width=1pt] (7) to (0);
        \draw[color=gray, line width=1pt] (7) to (1);
        \draw[color=black, line width=3pt] (7) to (2);
        \draw[color=gray, line width=1pt] (7) to (3);
        \draw[color=gray, line width=1pt] (7) to (4);
        \draw[color=gray, line width=1pt] (7) to (5);
        \draw[color=gray, line width=1pt] (7) to (6);
      \end{scope}
    \end{tikzpicture}}}
    \caption{Solutions for the TSP example that minimize  different $\mathfrak{b}$ ordered measures.\label{fig:tsp_solsN}}
\end{center}
\end{figure}

\begin{figure}[h]
\begin{center}
\fbox{\adjustbox{width=0.33\textwidth}{{\centering $\QO$-\texttt{mean}}
  \begin{tikzpicture}[scale=0.1]
      \draw
        (99.964, 29.443) node[draw, fill=gray!20, minimum size=1pt] (0){1}
        (8.374, 34.1) node[draw, fill=gray!20, minimum size=1pt] (1){2}
        (89.791, 18.66) node[draw, fill=gray!20, minimum size=1pt] (2){3}
        (60.064, 34.381) node[draw, fill=gray!20, minimum size=1pt] (3){4}
        (22.225, 50.676) node[draw, fill=gray!20, minimum size=1pt] (4){5}
        (17.856, 88.361) node[draw, fill=gray!20, minimum size=1pt] (5){6}
        (96.902, 50.93) node[draw, fill=gray!20, minimum size=1pt] (6){7}
        (55.599, 13.246) node[draw, fill=gray!20, minimum size=1pt] (7){8};
      \begin{scope}[->]
        \draw[color=gray, line width=1pt] (0) to (1);
        \draw[color=gray, line width=1pt] (0) to (2);
        \draw[color=black, line width=3pt] (0) to (3);
        \draw[color=gray, line width=1pt] (0) to (4);
        \draw[color=gray, line width=1pt] (0) to (5);
        \draw[color=gray, line width=1pt] (0) to (6);
        \draw[color=gray, line width=1pt] (0) to (7);
        \draw[color=gray, line width=1pt] (1) to (0);
        \draw[color=gray, line width=1pt] (1) to (2);
        \draw[color=gray, line width=1pt] (1) to (3);
        \draw[color=gray, line width=1pt] (1) to (4);
        \draw[color=gray, line width=1pt] (1) to (5);
        \draw[color=gray, line width=1pt] (1) to (6);
        \draw[color=black, line width=3pt] (1) to (7);
        \draw[color=gray, line width=1pt] (2) to (0);
        \draw[color=gray, line width=1pt] (2) to (1);
        \draw[color=gray, line width=1pt] (2) to (3);
        \draw[color=gray, line width=1pt] (2) to (4);
        \draw[color=gray, line width=1pt] (2) to (5);
        \draw[color=black, line width=3pt] (2) to (6);
        \draw[color=gray, line width=1pt] (2) to (7);
        \draw[color=gray, line width=1pt] (3) to (0);
        \draw[color=gray, line width=1pt] (3) to (1);
        \draw[color=gray, line width=1pt] (3) to (2);
        \draw[color=black, line width=3pt] (3) to (4);
        \draw[color=gray, line width=1pt] (3) to (5);
        \draw[color=gray, line width=1pt] (3) to (6);
        \draw[color=gray, line width=1pt] (3) to (7);
        \draw[color=gray, line width=1pt] (4) to (0);
        \draw[color=gray, line width=1pt] (4) to (1);
        \draw[color=gray, line width=1pt] (4) to (2);
        \draw[color=gray, line width=1pt] (4) to (3);
        \draw[color=black, line width=3pt] (4) to (5);
        \draw[color=gray, line width=1pt] (4) to (6);
        \draw[color=gray, line width=1pt] (4) to (7);
        \draw[color=gray, line width=1pt] (5) to (0);
        \draw[color=black, line width=3pt] (5) to (1);
        \draw[color=gray, line width=1pt] (5) to (2);
        \draw[color=gray, line width=1pt] (5) to (3);
        \draw[color=gray, line width=1pt] (5) to (4);
        \draw[color=gray, line width=1pt] (5) to (6);
        \draw[color=gray, line width=1pt] (5) to (7);
        \draw[color=black, line width=3pt] (6) to (0);
        \draw[color=gray, line width=1pt] (6) to (1);
        \draw[color=gray, line width=1pt] (6) to (2);
        \draw[color=gray, line width=1pt] (6) to (3);
        \draw[color=gray, line width=1pt] (6) to (4);
        \draw[color=gray, line width=1pt] (6) to (5);
        \draw[color=gray, line width=1pt] (6) to (7);
        \draw[color=gray, line width=1pt] (7) to (0);
        \draw[color=gray, line width=1pt] (7) to (1);
        \draw[color=black, line width=3pt] (7) to (2);
        \draw[color=gray, line width=1pt] (7) to (3);
        \draw[color=gray, line width=1pt] (7) to (4);
        \draw[color=gray, line width=1pt] (7) to (5);
        \draw[color=gray, line width=1pt] (7) to (6);
      \end{scope}
    \end{tikzpicture}}}~\fbox{\adjustbox{width=0.33\textwidth}{{\centering $\QO$-\texttt{trim}}
  \begin{tikzpicture}[scale=0.1]
      \draw
        (99.964, 29.443) node[draw, fill=gray!20, minimum size=1pt] (0){1}
        (8.374, 34.1) node[draw, fill=gray!20, minimum size=1pt] (1){2}
        (89.791, 18.66) node[draw, fill=gray!20, minimum size=1pt] (2){3}
        (60.064, 34.381) node[draw, fill=gray!20, minimum size=1pt] (3){4}
        (22.225, 50.676) node[draw, fill=gray!20, minimum size=1pt] (4){5}
        (17.856, 88.361) node[draw, fill=gray!20, minimum size=1pt] (5){6}
        (96.902, 50.93) node[draw, fill=gray!20, minimum size=1pt] (6){7}
        (55.599, 13.246) node[draw, fill=gray!20, minimum size=1pt] (7){8};
      \begin{scope}[->]
        \draw[color=gray, line width=1pt] (0) to (1);
        \draw[color=gray, line width=1pt] (0) to (2);
        \draw[color=gray, line width=1pt] (0) to (3);
        \draw[color=gray, line width=1pt] (0) to (4);
        \draw[color=gray, line width=1pt] (0) to (5);
        \draw[color=black, line width=3pt] (0) to (6);
        \draw[color=gray, line width=1pt] (0) to (7);
        \draw[color=gray, line width=1pt] (1) to (0);
        \draw[color=gray, line width=1pt] (1) to (2);
        \draw[color=gray, line width=1pt] (1) to (3);
        \draw[color=gray, line width=1pt] (1) to (4);
        \draw[color=gray, line width=1pt] (1) to (5);
        \draw[color=gray, line width=1pt] (1) to (6);
        \draw[color=black, line width=3pt] (1) to (7);
        \draw[color=black, line width=3pt] (2) to (0);
        \draw[color=gray, line width=1pt] (2) to (1);
        \draw[color=gray, line width=1pt] (2) to (3);
        \draw[color=gray, line width=1pt] (2) to (4);
        \draw[color=gray, line width=1pt] (2) to (5);
        \draw[color=gray, line width=1pt] (2) to (6);
        \draw[color=gray, line width=1pt] (2) to (7);
        \draw[color=gray, line width=1pt] (3) to (0);
        \draw[color=gray, line width=1pt] (3) to (1);
        \draw[color=black, line width=3pt] (3) to (2);
        \draw[color=gray, line width=1pt] (3) to (4);
        \draw[color=gray, line width=1pt] (3) to (5);
        \draw[color=gray, line width=1pt] (3) to (6);
        \draw[color=gray, line width=1pt] (3) to (7);
        \draw[color=gray, line width=1pt] (4) to (0);
        \draw[color=black, line width=3pt] (4) to (1);
        \draw[color=gray, line width=1pt] (4) to (2);
        \draw[color=gray, line width=1pt] (4) to (3);
        \draw[color=gray, line width=1pt] (4) to (5);
        \draw[color=gray, line width=1pt] (4) to (6);
        \draw[color=gray, line width=1pt] (4) to (7);
        \draw[color=gray, line width=1pt] (5) to (0);
        \draw[color=gray, line width=1pt] (5) to (1);
        \draw[color=gray, line width=1pt] (5) to (2);
        \draw[color=gray, line width=1pt] (5) to (3);
        \draw[color=black, line width=3pt] (5) to (4);
        \draw[color=gray, line width=1pt] (5) to (6);
        \draw[color=gray, line width=1pt] (5) to (7);
        \draw[color=gray, line width=1pt] (6) to (0);
        \draw[color=gray, line width=1pt] (6) to (1);
        \draw[color=gray, line width=1pt] (6) to (2);
        \draw[color=gray, line width=1pt] (6) to (3);
        \draw[color=gray, line width=1pt] (6) to (4);
        \draw[color=black, line width=3pt] (6) to (5);
        \draw[color=gray, line width=1pt] (6) to (7);
        \draw[color=gray, line width=1pt] (7) to (0);
        \draw[color=gray, line width=1pt] (7) to (1);
        \draw[color=gray, line width=1pt] (7) to (2);
        \draw[color=black, line width=3pt] (7) to (3);
        \draw[color=gray, line width=1pt] (7) to (4);
        \draw[color=gray, line width=1pt] (7) to (5);
        \draw[color=gray, line width=1pt] (7) to (6);
      \end{scope}
    \end{tikzpicture}}}~\fbox{\adjustbox{width=0.33\textwidth}{{\centering $\QO$-\texttt{max}}
  \begin{tikzpicture}[scale=0.1]
      \draw
        (99.964, 29.443) node[draw, fill=gray!20, minimum size=1pt] (0){1}
        (8.374, 34.1) node[draw, fill=gray!20, minimum size=1pt] (1){2}
        (89.791, 18.66) node[draw, fill=gray!20, minimum size=1pt] (2){3}
        (60.064, 34.381) node[draw, fill=gray!20, minimum size=1pt] (3){4}
        (22.225, 50.676) node[draw, fill=gray!20, minimum size=1pt] (4){5}
        (17.856, 88.361) node[draw, fill=gray!20, minimum size=1pt] (5){6}
        (96.902, 50.93) node[draw, fill=gray!20, minimum size=1pt] (6){7}
        (55.599, 13.246) node[draw, fill=gray!20, minimum size=1pt] (7){8};
      \begin{scope}[->]
        \draw[color=gray, line width=1pt] (0) to (1);
        \draw[color=gray, line width=1pt] (0) to (2);
        \draw[color=gray, line width=1pt] (0) to (3);
        \draw[color=gray, line width=1pt] (0) to (4);
        \draw[color=gray, line width=1pt] (0) to (5);
        \draw[color=gray, line width=1pt] (0) to (6);
        \draw[color=black, line width=3pt] (0) to (7);
        \draw[color=gray, line width=1pt] (1) to (0);
        \draw[color=gray, line width=1pt] (1) to (2);
        \draw[color=black, line width=3pt] (1) to (3);
        \draw[color=gray, line width=1pt] (1) to (4);
        \draw[color=gray, line width=1pt] (1) to (5);
        \draw[color=gray, line width=1pt] (1) to (6);
        \draw[color=gray, line width=1pt] (1) to (7);
        \draw[color=gray, line width=1pt] (2) to (0);
        \draw[color=gray, line width=1pt] (2) to (1);
        \draw[color=gray, line width=1pt] (2) to (3);
        \draw[color=gray, line width=1pt] (2) to (4);
        \draw[color=gray, line width=1pt] (2) to (5);
        \draw[color=black, line width=3pt] (2) to (6);
        \draw[color=gray, line width=1pt] (2) to (7);
        \draw[color=gray, line width=1pt] (3) to (0);
        \draw[color=gray, line width=1pt] (3) to (1);
        \draw[color=black, line width=3pt] (3) to (2);
        \draw[color=gray, line width=1pt] (3) to (4);
        \draw[color=gray, line width=1pt] (3) to (5);
        \draw[color=gray, line width=1pt] (3) to (6);
        \draw[color=gray, line width=1pt] (3) to (7);
        \draw[color=gray, line width=1pt] (4) to (0);
        \draw[color=gray, line width=1pt] (4) to (1);
        \draw[color=gray, line width=1pt] (4) to (2);
        \draw[color=gray, line width=1pt] (4) to (3);
        \draw[color=black, line width=3pt] (4) to (5);
        \draw[color=gray, line width=1pt] (4) to (6);
        \draw[color=gray, line width=1pt] (4) to (7);
        \draw[color=gray, line width=1pt] (5) to (0);
        \draw[color=black, line width=3pt] (5) to (1);
        \draw[color=gray, line width=1pt] (5) to (2);
        \draw[color=gray, line width=1pt] (5) to (3);
        \draw[color=gray, line width=1pt] (5) to (4);
        \draw[color=gray, line width=1pt] (5) to (6);
        \draw[color=gray, line width=1pt] (5) to (7);
        \draw[color=black, line width=3pt] (6) to (0);
        \draw[color=gray, line width=1pt] (6) to (1);
        \draw[color=gray, line width=1pt] (6) to (2);
        \draw[color=gray, line width=1pt] (6) to (3);
        \draw[color=gray, line width=1pt] (6) to (4);
        \draw[color=gray, line width=1pt] (6) to (5);
        \draw[color=gray, line width=1pt] (6) to (7);
        \draw[color=gray, line width=1pt] (7) to (0);
        \draw[color=gray, line width=1pt] (7) to (1);
        \draw[color=gray, line width=1pt] (7) to (2);
        \draw[color=gray, line width=1pt] (7) to (3);
        \draw[color=black, line width=3pt] (7) to (4);
        \draw[color=gray, line width=1pt] (7) to (5);
        \draw[color=gray, line width=1pt] (7) to (6);
      \end{scope}
    \end{tikzpicture}}}
    \caption{Solutions for the TSP example that minimize  different $\QO$ ordered measures..\label{fig:tsp_solsQ}}
\end{center}
\end{figure}

Note that even though the combinatorial complexity of this toy example is limited, in almost all cases, the solutions obtained with the different methodologies are different. For instance, the solution to the classical TSP is obtained when minimizing the length of the cycle, which is equivalent to the $\ell$-\texttt{mean} solution. However, when minimizing the maximum ($\ell$-\texttt{max}) length, the resulting cycles differ, and can be seen as robust solutions to the classical TSP. The solutions obtained by minimizing the absolute and square deviations with respect to the previous $\ell$-measures also differ. We would like to highlight some of the solutions compared with the classical TSP solution ($\ell$-\texttt{mean}).  In the $\ell$-\texttt{max} solution, in order to avoid large cycle lengths, the arc $(6,7)$ is not used, and construct a larger cycle but with shortest legs. The solution of the square deviation with respect to the maximum also seems to force the maximum length to be smaller, and the length of the legs in the cycle to be similar.

\subsection{Weighted Multicover Set Problem}

In the Weighted Multicover Set Problem (WMCSP) we are given a set of  $m$  items and  $n$ subsets of these items, $S_1, \ldots, S_n \subset N=\{1, \ldots, n\}$ are given. Each item $j \in M= \{1, \ldots, m\}$ is endowed with a weight $w_j$ that represents the preference of the item of its unit cost. The goal of the WMCSP is to decide which sets to accept to cover all the items by minimizing the weighted sum of the covered items. 
$$
y_i = \begin{cases}
    1 & \mbox{if subset $S_i$ is selected,}\\
    0 & \mbox{otherwise,}
\end{cases}
$$
the number of items of type $j$ can be calculated as $\sum_{i \in N:\atop j \in S_i} y_i$, that is, the number of times an activated set contains the item $j$. Thus, the overall cost/weight of item $j \in M$ for a solution induced by the $y$-variables is $x_j = w_j \sum_{i \in N:\atop j \in S_i} y_i$.

This problem has applications in various fields, such as healthcare, where items represent patients and sets correspond to different forms of medical care (e.g., nurses, physicians, medications, and medical assistance). Each patient is assigned a weight representing the severity of their illness or their level of urgency. A different application comes from telecommunication networks (as internet service providers or Wi-Fi routers to serve multiple users), were each service is allowed to \emph{cover} a set of users, each of them with a different demand that can be supplied by different types of connections (activated sets). 

The usual aggregation of the costs/weights is done using either the total sum or the average, which is then minimized to get a solution of the problem. Nevertheless, one can aggregate these costs using the measures described in this paper.

The constraints of the WMSCP are the following:
\begin{align*}
\sum_{i\in N:\atop j \in S_i} y_i \geq 1, \forall j \in M.
\end{align*}
to assure that all the items are covered by at least one set.

To show the differences between some of the approaches, we consider a simple instance of the problem, with $m=6$ items and $n=10$ sets that we draw in Figure \ref{fig:scp0}, where the sets are drawn at the top of the plot, items in the bottom (along with their weight), and the lines indicate the inclusion of the item in the set.

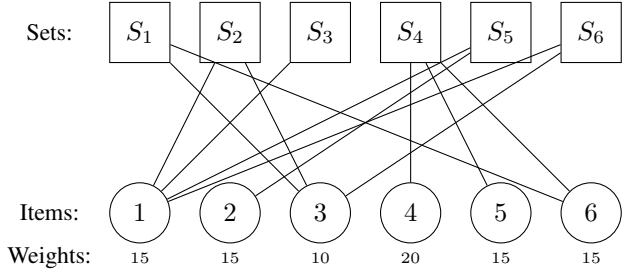
\begin{figure}[h!]
\begin{center}
\begin{tikzpicture}[scale=1.2, every node/.style={minimum size=0.8cm, inner sep=2pt}]
\node(sets) at (-3.5,2) {\small Sets:};
\node(sets) at (-3.5,0) {\small Items:};
\node(sets) at (-3.5,-0.5) {\small Weights:};
\node[draw, rectangle,thin, fill=white] (S0) at (-2.5, 2) {$S_{1}$};
\node[draw, rectangle,thin, fill=white] (S1) at (-1.5, 2) {$S_{2}$};
\node[draw, rectangle,thin, fill=white] (S2) at (-0.5, 2) {$S_{3}$};
\node[draw, rectangle,thin, fill=white] (S3) at (0.5, 2) {$S_{4}$};
\node[draw, rectangle,thin, fill=white] (S4) at (1.5, 2) {$S_{5}$};
\node[draw, rectangle,thin, fill=white] (S5) at (2.5, 2) {$S_{6}$};
\node[draw,circle] (I0) at (-2.5, 0) {$1$};
\node (w0) at (-2.5, -0.5) {\tiny $15$};
\node[draw,circle] (I1) at (-1.5, 0) {$2$};
\node (w1) at (-1.5, -0.5) {\tiny $15$};
\node[draw,circle] (I2) at (-0.5, 0) {$3$};
\node (w2) at (-0.5, -0.5) {\tiny $10$};
\node[draw,circle] (I3) at (0.5, 0) {$4$};
\node (w3) at (0.5, -0.5) {\tiny $20$};
\node[draw,circle] (I4) at (1.5, 0) {$5$};
\node (w4) at (1.5, -0.5) {\tiny $15$};
\node[draw,circle] (I5) at (2.5, 0) {$6$};
\node (w5) at (2.5, -0.5) {\tiny $15$};
\draw[thin] (S0) -- (I2);
\draw[thin] (S0) -- (I5);
\draw[thin] (S1) -- (I0);
\draw[thin] (S1) -- (I2);
\draw[thin] (S2) -- (I0);
\draw[thin] (S3) -- (I3);
\draw[thin] (S3) -- (I4);
\draw[thin] (S3) -- (I5);
\draw[thin] (S4) -- (I0);
\draw[thin] (S4) -- (I1);
\draw[thin] (S5) -- (I0);
\draw[thin] (S5) -- (I2);
\end{tikzpicture}
\end{center}
\caption{Instance for the WMCSP.\label{fig:scp0}}
\end{figure}

We compute the solution of the WMCSP for this instance, the $\ell$, $\mathfrak{b}$, and $\QO$ measures, with $\blambda$ induced by \texttt{mean}, $25\%$-$25\%$-trimmed \texttt{trimmed-mean}, and third quartile (\texttt{3quartile}), and the matrix $M$ is inducedinduced by the same $\blambda$ for the squared loss (Example \ref{ex:2}).
The $9$ solutions obtained with these methodologies are drawn in Figure \ref{fig:scp_sols}. We highlight in color gray the sets activated in each solution.
\begin{figure}[h!]
\begin{center}
\fbox{\adjustbox{scale=0.6}{\begin{tikzpicture}[scale=1.5, every node/.style={minimum size=0.8cm, inner sep=2pt}]
\node[draw, rectangle,thin, fill=white] (S0) at (-2.5, 2) {$S_{1}$};
\node[draw, rectangle,thin, fill=white] (S1) at (-1.5, 2) {$S_{2}$};
\node[draw, rectangle,thin, fill=white] (S2) at (-0.5, 2) {$S_{3}$};
\node[draw, rectangle,very thick, fill=gray!30] (S3) at (0.5, 2) {$S_{4}$};
\node[draw, rectangle,very thick, fill=gray!30] (S4) at (1.5, 2) {$S_{5}$};
\node[draw, rectangle,very thick, fill=gray!30] (S5) at (2.5, 2) {$S_{6}$};
\node[draw,circle] (I0) at (-2.5, 0) {$1$};
\node (w0) at (-2.5, -0.5) {\tiny $15$};
\node[draw,circle] (I1) at (-1.5, 0) {$2$};
\node (w1) at (-1.5, -0.5) {\tiny $15$};
\node[draw,circle] (I2) at (-0.5, 0) {$3$};
\node (w2) at (-0.5, -0.5) {\tiny $10$};
\node[draw,circle] (I3) at (0.5, 0) {$4$};
\node (w3) at (0.5, -0.5) {\tiny $20$};
\node[draw,circle] (I4) at (1.5, 0) {$5$};
\node (w4) at (1.5, -0.5) {\tiny $15$};
\node[draw,circle] (I5) at (2.5, 0) {$6$};
\node (w5) at (2.5, -0.5) {\tiny $15$};
\draw[thin] (S0) -- (I2);
\draw[thin] (S0) -- (I5);
\draw[thin] (S1) -- (I0);
\draw[thin] (S1) -- (I2);
\draw[thin] (S2) -- (I0);
\draw[very thick] (S3) -- (I3);
\draw[very thick] (S3) -- (I4);
\draw[very thick] (S3) -- (I5);
\draw[very thick] (S4) -- (I0);
\draw[very thick] (S4) -- (I1);
\draw[very thick] (S5) -- (I0);
\draw[very thick] (S5) -- (I2);
\end{tikzpicture}}}~\fbox{\adjustbox{scale=0.6}{\begin{tikzpicture}[scale=1.5, every node/.style={minimum size=0.8cm, inner sep=2pt}]
\node[draw, rectangle,thin, fill=white] (S0) at (-2.5, 2) {$S_{1}$};
\node[draw, rectangle,thin, fill=white] (S1) at (-1.5, 2) {$S_{2}$};
\node[draw, rectangle,very thick, fill=gray!30] (S2) at (-0.5, 2) {$S_{3}$};
\node[draw, rectangle,very thick, fill=gray!30] (S3) at (0.5, 2) {$S_{4}$};
\node[draw, rectangle,very thick, fill=gray!30] (S4) at (1.5, 2) {$S_{5}$};
\node[draw, rectangle,very thick, fill=gray!30] (S5) at (2.5, 2) {$S_{6}$};
\node[draw,circle] (I0) at (-2.5, 0) {$1$};
\node (w0) at (-2.5, -0.5) {\tiny $15$};
\node[draw,circle] (I1) at (-1.5, 0) {$2$};
\node (w1) at (-1.5, -0.5) {\tiny $15$};
\node[draw,circle] (I2) at (-0.5, 0) {$3$};
\node (w2) at (-0.5, -0.5) {\tiny $10$};
\node[draw,circle] (I3) at (0.5, 0) {$4$};
\node (w3) at (0.5, -0.5) {\tiny $20$};
\node[draw,circle] (I4) at (1.5, 0) {$5$};
\node (w4) at (1.5, -0.5) {\tiny $15$};
\node[draw,circle] (I5) at (2.5, 0) {$6$};
\node (w5) at (2.5, -0.5) {\tiny $15$};
\draw[thin] (S0) -- (I2);
\draw[thin] (S0) -- (I5);
\draw[thin] (S1) -- (I0);
\draw[thin] (S1) -- (I2);
\draw[very thick] (S2) -- (I0);
\draw[very thick] (S3) -- (I3);
\draw[very thick] (S3) -- (I4);
\draw[very thick] (S3) -- (I5);
\draw[very thick] (S4) -- (I0);
\draw[very thick] (S4) -- (I1);
\draw[very thick] (S5) -- (I0);
\draw[very thick] (S5) -- (I2);
\end{tikzpicture}}}~\fbox{\adjustbox{scale=0.6}{\begin{tikzpicture}[scale=1.5, every node/.style={minimum size=0.8cm, inner sep=2pt}]
\node[draw, rectangle,thin, fill=white] (S0) at (-2.5, 2) {$S_{1}$};
\node[draw, rectangle,thin, fill=white] (S1) at (-1.5, 2) {$S_{2}$};
\node[draw, rectangle,thin, fill=white] (S2) at (-0.5, 2) {$S_{3}$};
\node[draw, rectangle,very thick, fill=gray!30] (S3) at (0.5, 2) {$S_{4}$};
\node[draw, rectangle,very thick, fill=gray!30] (S4) at (1.5, 2) {$S_{5}$};
\node[draw, rectangle,very thick, fill=gray!30] (S5) at (2.5, 2) {$S_{6}$};
\node[draw,circle] (I0) at (-2.5, 0) {$1$};
\node (w0) at (-2.5, -0.5) {\tiny $15$};
\node[draw,circle] (I1) at (-1.5, 0) {$2$};
\node (w1) at (-1.5, -0.5) {\tiny $15$};
\node[draw,circle] (I2) at (-0.5, 0) {$3$};
\node (w2) at (-0.5, -0.5) {\tiny $10$};
\node[draw,circle] (I3) at (0.5, 0) {$4$};
\node (w3) at (0.5, -0.5) {\tiny $20$};
\node[draw,circle] (I4) at (1.5, 0) {$5$};
\node (w4) at (1.5, -0.5) {\tiny $15$};
\node[draw,circle] (I5) at (2.5, 0) {$6$};
\node (w5) at (2.5, -0.5) {\tiny $15$};
\draw[thin] (S0) -- (I2);
\draw[thin] (S0) -- (I5);
\draw[thin] (S1) -- (I0);
\draw[thin] (S1) -- (I2);
\draw[thin] (S2) -- (I0);
\draw[very thick] (S3) -- (I3);
\draw[very thick] (S3) -- (I4);
\draw[very thick] (S3) -- (I5);
\draw[very thick] (S4) -- (I0);
\draw[very thick] (S4) -- (I1);
\draw[very thick] (S5) -- (I0);
\draw[very thick] (S5) -- (I2);
\end{tikzpicture}}}\\
 \fbox{\adjustbox{scale=0.6}{\begin{tikzpicture}[scale=1.5, every node/.style={minimum size=0.8cm, inner sep=2pt}]
\node[draw, rectangle,thin, fill=white] (S0) at (-2.5, 2) {$S_{1}$};
\node[draw, rectangle,very thick, fill=gray!30] (S1) at (-1.5, 2) {$S_{2}$};
\node[draw, rectangle,thin, fill=white] (S2) at (-0.5, 2) {$S_{3}$};
\node[draw, rectangle,very thick, fill=gray!30] (S3) at (0.5, 2) {$S_{4}$};
\node[draw, rectangle,very thick, fill=gray!30] (S4) at (1.5, 2) {$S_{5}$};
\node[draw, rectangle,thin, fill=white] (S5) at (2.5, 2) {$S_{6}$};
\node[draw,circle] (I0) at (-2.5, 0) {$1$};
\node (w0) at (-2.5, -0.5) {\tiny $15$};
\node[draw,circle] (I1) at (-1.5, 0) {$2$};
\node (w1) at (-1.5, -0.5) {\tiny $15$};
\node[draw,circle] (I2) at (-0.5, 0) {$3$};
\node (w2) at (-0.5, -0.5) {\tiny $10$};
\node[draw,circle] (I3) at (0.5, 0) {$4$};
\node (w3) at (0.5, -0.5) {\tiny $20$};
\node[draw,circle] (I4) at (1.5, 0) {$5$};
\node (w4) at (1.5, -0.5) {\tiny $15$};
\node[draw,circle] (I5) at (2.5, 0) {$6$};
\node (w5) at (2.5, -0.5) {\tiny $15$};
\draw[thin] (S0) -- (I2);
\draw[thin] (S0) -- (I5);
\draw[very thick] (S1) -- (I0);
\draw[very thick] (S1) -- (I2);
\draw[thin] (S2) -- (I0);
\draw[very thick] (S3) -- (I3);
\draw[very thick] (S3) -- (I4);
\draw[very thick] (S3) -- (I5);
\draw[very thick] (S4) -- (I0);
\draw[very thick] (S4) -- (I1);
\draw[thin] (S5) -- (I0);
\draw[thin] (S5) -- (I2);
\end{tikzpicture}}}~\fbox{\adjustbox{scale=0.6}{\begin{tikzpicture}[scale=1.5, every node/.style={minimum size=0.8cm, inner sep=2pt}]
\node[draw, rectangle,very thick, fill=gray!30] (S0) at (-2.5, 2) {$S_{1}$};
\node[draw, rectangle,thin, fill=white] (S1) at (-1.5, 2) {$S_{2}$};
\node[draw, rectangle,very thick, fill=gray!30] (S2) at (-0.5, 2) {$S_{3}$};
\node[draw, rectangle,very thick, fill=gray!30] (S3) at (0.5, 2) {$S_{4}$};
\node[draw, rectangle,very thick, fill=gray!30] (S4) at (1.5, 2) {$S_{5}$};
\node[draw, rectangle,thin, fill=white] (S5) at (2.5, 2) {$S_{6}$};
\node[draw,circle] (I0) at (-2.5, 0) {$1$};
\node (w0) at (-2.5, -0.5) {\tiny $15$};
\node[draw,circle] (I1) at (-1.5, 0) {$2$};
\node (w1) at (-1.5, -0.5) {\tiny $15$};
\node[draw,circle] (I2) at (-0.5, 0) {$3$};
\node (w2) at (-0.5, -0.5) {\tiny $10$};
\node[draw,circle] (I3) at (0.5, 0) {$4$};
\node (w3) at (0.5, -0.5) {\tiny $20$};
\node[draw,circle] (I4) at (1.5, 0) {$5$};
\node (w4) at (1.5, -0.5) {\tiny $15$};
\node[draw,circle] (I5) at (2.5, 0) {$6$};
\node (w5) at (2.5, -0.5) {\tiny $15$};
\draw[very thick] (S0) -- (I2);
\draw[very thick] (S0) -- (I5);
\draw[thin] (S1) -- (I0);
\draw[thin] (S1) -- (I2);
\draw[very thick] (S2) -- (I0);
\draw[very thick] (S3) -- (I3);
\draw[very thick] (S3) -- (I4);
\draw[very thick] (S3) -- (I5);
\draw[very thick] (S4) -- (I0);
\draw[very thick] (S4) -- (I1);
\draw[thin] (S5) -- (I0);
\draw[thin] (S5) -- (I2);
\end{tikzpicture}}}~\fbox{\adjustbox{scale=0.6}{\begin{tikzpicture}[scale=1.5, every node/.style={minimum size=0.8cm, inner sep=2pt}]
\node[draw, rectangle,very thick, fill=gray!30] (S0) at (-2.5, 2) {$S_{1}$};
\node[draw, rectangle,thin, fill=white] (S1) at (-1.5, 2) {$S_{2}$};
\node[draw, rectangle,very thick, fill=gray!30] (S2) at (-0.5, 2) {$S_{3}$};
\node[draw, rectangle,very thick, fill=gray!30] (S3) at (0.5, 2) {$S_{4}$};
\node[draw, rectangle,very thick, fill=gray!30] (S4) at (1.5, 2) {$S_{5}$};
\node[draw, rectangle,thin, fill=white] (S5) at (2.5, 2) {$S_{6}$};
\node[draw,circle] (I0) at (-2.5, 0) {$1$};
\node (w0) at (-2.5, -0.5) {\tiny $15$};
\node[draw,circle] (I1) at (-1.5, 0) {$2$};
\node (w1) at (-1.5, -0.5) {\tiny $15$};
\node[draw,circle] (I2) at (-0.5, 0) {$3$};
\node (w2) at (-0.5, -0.5) {\tiny $10$};
\node[draw,circle] (I3) at (0.5, 0) {$4$};
\node (w3) at (0.5, -0.5) {\tiny $20$};
\node[draw,circle] (I4) at (1.5, 0) {$5$};
\node (w4) at (1.5, -0.5) {\tiny $15$};
\node[draw,circle] (I5) at (2.5, 0) {$6$};
\node (w5) at (2.5, -0.5) {\tiny $15$};
\draw[very thick] (S0) -- (I2);
\draw[very thick] (S0) -- (I5);
\draw[thin] (S1) -- (I0);
\draw[thin] (S1) -- (I2);
\draw[very thick] (S2) -- (I0);
\draw[very thick] (S3) -- (I3);
\draw[very thick] (S3) -- (I4);
\draw[very thick] (S3) -- (I5);
\draw[very thick] (S4) -- (I0);
\draw[very thick] (S4) -- (I1);
\draw[thin] (S5) -- (I0);
\draw[thin] (S5) -- (I2);
\end{tikzpicture}}}\\
\fbox{\adjustbox{scale=0.6}{\begin{tikzpicture}[scale=1.5, every node/.style={minimum size=0.8cm, inner sep=2pt}]
\node[draw, rectangle,very thick, fill=gray!30] (S0) at (-2.5, 2) {$S_{1}$};
\node[draw, rectangle,thin, fill=white] (S1) at (-1.5, 2) {$S_{2}$};
\node[draw, rectangle,thin, fill=white] (S2) at (-0.5, 2) {$S_{3}$};
\node[draw, rectangle,very thick, fill=gray!30] (S3) at (0.5, 2) {$S_{4}$};
\node[draw, rectangle,very thick, fill=gray!30] (S4) at (1.5, 2) {$S_{5}$};
\node[draw, rectangle,thin, fill=white] (S5) at (2.5, 2) {$S_{6}$};
\node[draw,circle] (I0) at (-2.5, 0) {$1$};
\node (w0) at (-2.5, -0.5) {\tiny $15$};
\node[draw,circle] (I1) at (-1.5, 0) {$2$};
\node (w1) at (-1.5, -0.5) {\tiny $15$};
\node[draw,circle] (I2) at (-0.5, 0) {$3$};
\node (w2) at (-0.5, -0.5) {\tiny $10$};
\node[draw,circle] (I3) at (0.5, 0) {$4$};
\node (w3) at (0.5, -0.5) {\tiny $20$};
\node[draw,circle] (I4) at (1.5, 0) {$5$};
\node (w4) at (1.5, -0.5) {\tiny $15$};
\node[draw,circle] (I5) at (2.5, 0) {$6$};
\node (w5) at (2.5, -0.5) {\tiny $15$};
\draw[very thick] (S0) -- (I2);
\draw[very thick] (S0) -- (I5);
\draw[thin] (S1) -- (I0);
\draw[thin] (S1) -- (I2);
\draw[thin] (S2) -- (I0);
\draw[very thick] (S3) -- (I3);
\draw[very thick] (S3) -- (I4);
\draw[very thick] (S3) -- (I5);
\draw[very thick] (S4) -- (I0);
\draw[very thick] (S4) -- (I1);
\draw[thin] (S5) -- (I0);
\draw[thin] (S5) -- (I2);
\end{tikzpicture}}}~\fbox{\adjustbox{scale=0.6}{\begin{tikzpicture}[scale=1.5, every node/.style={minimum size=0.8cm, inner sep=2pt}]
\node[draw, rectangle,very thick, fill=gray!30] (S0) at (-2.5, 2) {$S_{1}$};
\node[draw, rectangle,thin, fill=white] (S1) at (-1.5, 2) {$S_{2}$};
\node[draw, rectangle,thin, fill=white] (S2) at (-0.5, 2) {$S_{3}$};
\node[draw, rectangle,very thick, fill=gray!30] (S3) at (0.5, 2) {$S_{4}$};
\node[draw, rectangle,very thick, fill=gray!30] (S4) at (1.5, 2) {$S_{5}$};
\node[draw, rectangle,thin, fill=white] (S5) at (2.5, 2) {$S_{6}$};
\node[draw,circle] (I0) at (-2.5, 0) {$1$};
\node (w0) at (-2.5, -0.5) {\tiny $15$};
\node[draw,circle] (I1) at (-1.5, 0) {$2$};
\node (w1) at (-1.5, -0.5) {\tiny $15$};
\node[draw,circle] (I2) at (-0.5, 0) {$3$};
\node (w2) at (-0.5, -0.5) {\tiny $10$};
\node[draw,circle] (I3) at (0.5, 0) {$4$};
\node (w3) at (0.5, -0.5) {\tiny $20$};
\node[draw,circle] (I4) at (1.5, 0) {$5$};
\node (w4) at (1.5, -0.5) {\tiny $15$};
\node[draw,circle] (I5) at (2.5, 0) {$6$};
\node (w5) at (2.5, -0.5) {\tiny $15$};
\draw[very thick] (S0) -- (I2);
\draw[very thick] (S0) -- (I5);
\draw[thin] (S1) -- (I0);
\draw[thin] (S1) -- (I2);
\draw[thin] (S2) -- (I0);
\draw[very thick] (S3) -- (I3);
\draw[very thick] (S3) -- (I4);
\draw[very thick] (S3) -- (I5);
\draw[very thick] (S4) -- (I0);
\draw[very thick] (S4) -- (I1);
\draw[thin] (S5) -- (I0);
\draw[thin] (S5) -- (I2);
\end{tikzpicture}}}~\fbox{\adjustbox{scale=0.6}{\begin{tikzpicture}[scale=1.5, every node/.style={minimum size=0.8cm, inner sep=2pt}]
\node[draw, rectangle,thin, fill=white] (S0) at (-2.5, 2) {$S_{1}$};
\node[draw, rectangle,very thick, fill=gray!30] (S1) at (-1.5, 2) {$S_{2}$};
\node[draw, rectangle,thin, fill=white] (S2) at (-0.5, 2) {$S_{3}$};
\node[draw, rectangle,very thick, fill=gray!30] (S3) at (0.5, 2) {$S_{4}$};
\node[draw, rectangle,very thick, fill=gray!30] (S4) at (1.5, 2) {$S_{5}$};
\node[draw, rectangle,thin, fill=white] (S5) at (2.5, 2) {$S_{6}$};
\node[draw,circle] (I0) at (-2.5, 0) {$1$};
\node (w0) at (-2.5, -0.5) {\tiny $15$};
\node[draw,circle] (I1) at (-1.5, 0) {$2$};
\node (w1) at (-1.5, -0.5) {\tiny $15$};
\node[draw,circle] (I2) at (-0.5, 0) {$3$};
\node (w2) at (-0.5, -0.5) {\tiny $10$};
\node[draw,circle] (I3) at (0.5, 0) {$4$};
\node (w3) at (0.5, -0.5) {\tiny $20$};
\node[draw,circle] (I4) at (1.5, 0) {$5$};
\node (w4) at (1.5, -0.5) {\tiny $15$};
\node[draw,circle] (I5) at (2.5, 0) {$6$};
\node (w5) at (2.5, -0.5) {\tiny $15$};
\draw[thin] (S0) -- (I2);
\draw[thin] (S0) -- (I5);
\draw[very thick] (S1) -- (I0);
\draw[very thick] (S1) -- (I2);
\draw[thin] (S2) -- (I0);
\draw[very thick] (S3) -- (I3);
\draw[very thick] (S3) -- (I4);
\draw[very thick] (S3) -- (I5);
\draw[very thick] (S4) -- (I0);
\draw[very thick] (S4) -- (I1);
\draw[thin] (S5) -- (I0);
\draw[thin] (S5) -- (I2);
\end{tikzpicture}}}
\end{center}
\caption{Solutions obtained with different approaches for the WMCSP.\label{fig:scp_sols}}
\end{figure}
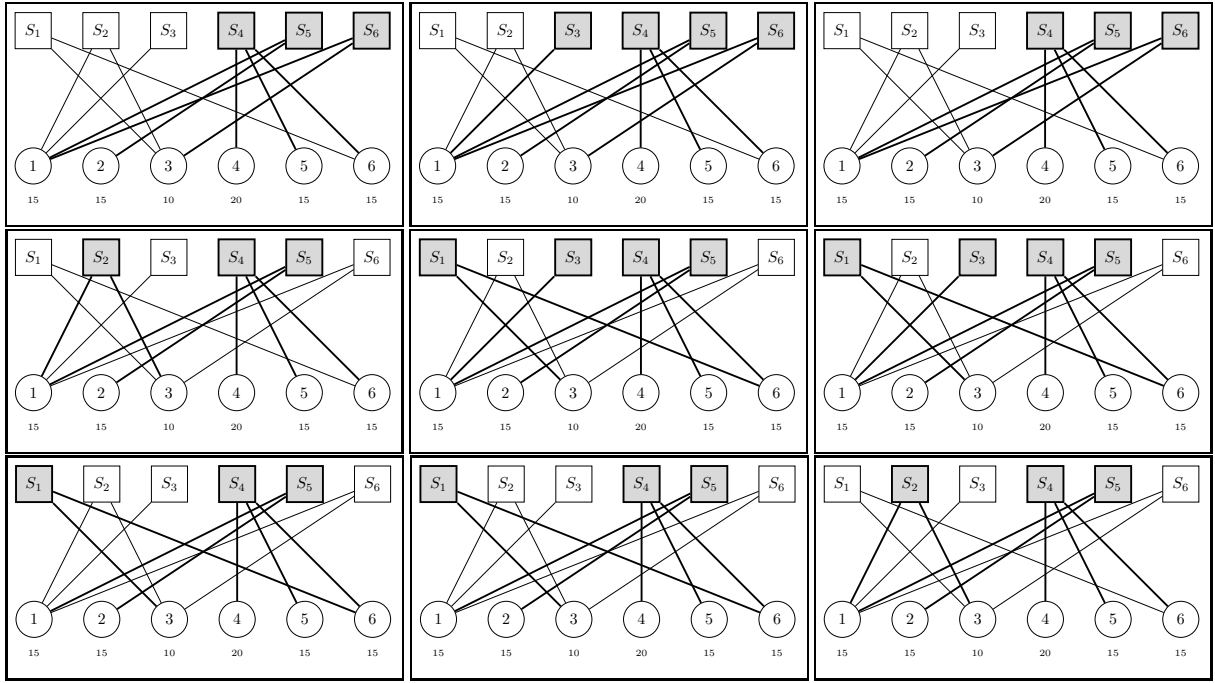


Note that, even in this toy example, designed for illustrative purposes, the solutions obtained with different methodologies are different, each of them responding to the minimization of the measure under study. In this way, decision makers are provided with an unified tool to construct feasible solutions of the same problem, but guided through different objectives.

\section{Conclusions}

In this paper, we provide a novel mathematical optimization-based framework for calculating measures that generalize a wide range of metrics that commonly appear in the Operations Research and Data Science literature. Apart from developing linear and linear bilevel formulations for computing $\tL$ and nested ordered measures, we propose a new family of quadratic  ordered-type measures, and develop a mixed integer linear optimization model for its computations. We empirically test our approaches  on a wide variety of instances, reporting the computation times required to calculate these measures, as well as their accuracy. With this study, we conclude that the optimization models together with the algorithmic strategies  implemented in off-the-shelf optimization solvers,  exhibit a similar performance than the add-hoc python libraries developed  to this end. Finally, we show how to incorporate these measures into optimization problems, where the values to be summarized with these measures are now part of the decision process, and then unknown. We show some results applying these models to ordered versions of well-known continuous and combinatorial  optimization problems,  namely, scenario analysis linear programming,  traveling salesman  and set cover problems,  although similarly they can be applied to other optimization problems, both with discrete or continuous domain. In most  cases, changing the metric that summarizes the values results in different solution structures.

\textbf{Acknowledgements}

The authors acknowledge financial support by the reasearch project PID2020-114594GB-C21 funded by the Spanish Ministerio de Ciencia e Innovación and Agencia Estatal de Investigación (MCIN/AEI/10.13039/501100011033).

\bibliographystyle{acm}  
\bibliography{refs}

\end{document}